\newcommand{\N}{\mathbb{N}}
\newcommand{\Sym}{\mathbb{S}}
\newcommand{\Alt}{\mathbb{A}}
\newcommand{\Aut}{\operatorname{Aut}}
\newcommand{\SL}{\mathrm{SL}}
\newcommand{\id}{\mathrm{id}}
\newcommand{\Soc}{\mathrm{Soc}}
\numberwithin{equation}{section}
\numberwithin{figure}{section}
\numberwithin{table}{section}
\theoremstyle{plain}
\newtheorem{thm}{Theorem}[section]
\newtheorem*{thm*}{Theorem}
\theoremstyle{plain}
\newtheorem{lem}[thm]{Lemma}
\newtheorem{cor}[thm]{Corollary}
\theoremstyle{plain}
\newtheorem{pro}[thm]{Proposition}
\newtheorem{defn}[thm]{Definition}
\newtheorem*{conjecture*}{Conjecture}
\newtheorem{example}[thm]{Example}
\newtheorem{question}[thm]{Question}
\theoremstyle{remark}
\newtheorem{rem}[thm]{Remark}
\theoremstyle{plain}
\newtheorem{exa}[thm]{Example}
\begin{document}

\lstset{language=GAP,
  showstringspaces=false,
  xleftmargin=0cm,
  xrightmargin=0.6cm,
  basicstyle=\small\ttfamily,
  frame=single,
  framerule=0pt,
}

\title{Experimenting with braces}
\title{On the number and properties of small braces}
\title{On skew braces and their ideals}
% TODO: think of a better title? Just some random ideas
% - experimentation with braces
% - (Contructing) the database/library of braces of small orders
% - Small braces library
% Please put your ideas here to save them for later brainstorming

\begin{abstract}
We define combinatorial representations of finite skew braces and use this idea
to produce a database of skew braces of small size. This database is then used
to explore different concepts of the theory of skew braces such as ideals, series of ideals, 
prime and semiprime ideals, Baer and Wedderburn radicals and solvability. 
The paper contains several questions.
\end{abstract}

\keywords{Braces, Yang--Baxter equation, Radical rings}

\author{A. Konovalov}
\author{A. Smoktunowicz}
\author{L. Vendramin}

\address{Centre for Interdisciplinary Research in Computational Algebra,
University of St Andrews, North Haugh, St Andrews, Fife, KY16 9SX, UK}
\email{alexander.konovalov@st-andrews.ac.uk}

\address{School of Mathematics, The University of Edinburgh, James Clerk Maxwell Building, The Kings Buildings, Mayfield Road EH9 3JZ, Edinburgh, UK}
\email{A.Smoktunowicz@ed.ac.uk}

\address{IMAS--CONICET and Departamento de Matem\'atica, FCEN, Universidad de Buenos Aires, Pabell\'on~1, Ciudad Universitaria, C1428EGA, Buenos Aires, Argentina}
\email{lvendramin@dm.uba.ar}

\maketitle
\setcounter{tocdepth}{1}
\tableofcontents

\section*{Introduction}

In this work we explore some algebraic structures related to solutions to the
celebrated Yang--Baxter equation.  Following Drinfeld~\cite{MR1183474}, a
\emph{set-theoretic solution of the Yang--Baxter equation} is defined as a pair
$(X,r)$, where $X$ is a set and $r\colon X\times X\to X\times X$ is a bijection
such that
\[
r_1r_2r_1=r_2r_1r_2,
\quad r_1=r\times\id,
\quad r_2=\id\times r.
\]

We will be interested in \emph{non-degenerate} solutions, that is solutions
$(X,r)$ where $r$ can be written as $r(x,y)=(\sigma_x(y),\tau_y(x))$ for
permutations $\sigma_x$ and $\tau_x$ of $X$.

Rump found that there is a deep connection between radical rings and
set-theoretic solutions of the Yang--Baxter equation.  The key observation is
the following. Let $R$ be a radical ring, that is an associative ring $R$ such
that for each $x \in R$ there exists $y\in R$ such that $x + y + xy=0$.  Then
the operation $x\circ y=x+y+xy$ turns $R$ into a group and 
\[  
    r\colon R\times R\to R\times R,
    \quad
    r(x,y)=(xy+y,(xy+y)'\circ x\circ y),
\]
where $z'$ denotes the inverse of $z$ with respect to the circle operation
$\circ$, is a non-degenerate solution
of the Yang--Baxter equation such that 
$r^2=\id_{R\times R}$. 
A natural question arises: do we really need
radical rings to construct such solutions? 

In~\cite{MR2278047} Rump introduced braces, a generalization of radical rings
that produces involutive solutions. There is a rich theory of braces, see for
example~\cite{MR3320237,
	MR3465351,MR3763276,MR3527540,BCJO,MR3478858,CDS,MR3574204,MR3177933,MR3447734,Dietzel,
	GI15,Rump,MR2298848,MR3291816}.  Later braces were
	generalized to \emph{skew braces} to allow the construction of
	non-involutive solutions~\cite{MR3647970}.  A \emph{skew brace} is a triple
	$(A,\circ,+)$, where $(A,+)$ and $(A,\circ)$ are (not necessarily abelian)
	groups and the compatilibity condition
\[
	a\circ(b+c)=a\circ b-a+a\circ c
\]
holds for all $a,b,c\in A$.  

If $\mathcal{X}$ is a property of groups, a skew
brace is said to be of $\mathcal{X}$-type if its additive group belongs to
$\mathcal{X}$.  For example, skew braces of abelian type are those braces
introduced by Rump in \cite{MR2278047} to study involutive set-theoretic
solutions. Such braces will be also called either \emph{classical braces} or
\emph{braces}. 

Skew braces have connections to several
different topics, see for
example~\cite{Bachiller3,Brz,MR3649817,Childs,DeCommer,JVA,MR3763907}.
In particular, skew braces
provide the right algebraic framework to study set-theoretic solutions to the
Yang--Baxter equation. 
The connection between set-theoretic solutions and skew braces is explained in
the following theorems. The first one shows that skew braces produce
set-theoretic solutions:

\begin{thm*}{\cite[Theorem 3.1]{MR3647970}}
	Let $A$ be a skew brace. The map 
	\[
		r_A\colon A\times A\to A\times A,\quad
		r_A(a,b)=(-a+a\circ b,(-a+a\circ b)'\circ a\circ b),
	\]
	is a non-degenerate
	set-theoretic solution of the Yang--Baxter equation.
\end{thm*}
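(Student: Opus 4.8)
The plan is to encode the skew-brace structure in its $\lambda$-map and then reduce the theorem to a short list of identities between the two group operations on $A$. For $a\in A$ set $\lambda_a\colon A\to A$, $\lambda_a(b)=-a+a\circ b$, so that $r_A(a,b)=(\lambda_a(b),\,\lambda_a(b)'\circ a\circ b)$ and $a\circ b=a+\lambda_a(b)$ for all $a,b$. From the compatibility condition $a\circ(b+c)=a\circ b-a+a\circ c$ one first gets $a\circ 0=a$ (solving in $(A,+)$), so the additive identity is also the identity of $\circ$; then, again from compatibility, each $\lambda_a$ is an automorphism of $(A,+)$, and from associativity of $\circ$ together with $a\circ b=a+\lambda_a(b)$ one gets that $a\mapsto\lambda_a$ is a group homomorphism $(A,\circ)\to\Aut(A,+)$. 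In particular $\lambda_{a\circ b}=\lambda_a\lambda_b$, $\lambda_{a'}=\lambda_a^{-1}$, and $\lambda_{a'}(a)=-a'$.

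Now write $r_A(a,b)=(\sigma_a(b),\tau_b(a))$ with $\sigma_a:=\lambda_a$ and $\tau_b(a):=\lambda_a(b)'\circ a\circ b$, and record the identity $\sigma_a(b)\circ\tau_b(a)=a\circ b$. Non-degeneracy then splits into two checks. First, each $\sigma_a$ is a permutation of $A$ because it is an automorphism of $(A,+)$. Second, each $\tau_b$ is a permutation: tracing the dependence on $a$ in $\sigma_a(b)\circ\tau_b(a)=a\circ b$ and using $\lambda_{a'}(a)=-a'$, one reduces the bijectivity of $a\mapsto\tau_b(a)$ to that of $a\mapsto -a'$, the composite of the $\circ$-inversion and the $+$-inversion. (Alternatively one exhibits $r_A^{-1}$ explicitly.)

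For the braid relation I would use the standard reduction for maps of the form $r(x,y)=(\sigma_x(y),\tau_y(x))$: evaluating $r_1r_2r_1$ and $r_2r_1r_2$ on a triple $(a,b,c)$ and comparing the three coordinates shows that $r_1r_2r_1=r_2r_1r_2$ is equivalent, for all $a,b,c\in A$, to the three identities
\[
\sigma_a\sigma_b=\sigma_{\sigma_a(b)}\sigma_{\tau_b(a)},\qquad
\tau_c\tau_b=\tau_{\tau_c(b)}\tau_{\sigma_b(c)},
\]
\[
\tau_{\sigma_{\tau_b(a)}(c)}\bigl(\sigma_a(b)\bigr)=\sigma_{\tau_{\sigma_b(c)}(a)}\bigl(\tau_c(b)\bigr).
\]
The first identity is immediate from the $\lambda$-map formalism: $\sigma_{\sigma_a(b)}\sigma_{\tau_b(a)}=\lambda_{\sigma_a(b)\circ\tau_b(a)}=\lambda_{a\circ b}=\lambda_a\lambda_b=\sigma_a\sigma_b$, using only that $\lambda$ is a homomorphism and $\sigma_a(b)\circ\tau_b(a)=a\circ b$.

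The remaining two identities are verified by the same recipe: replace every $\circ$ via $x\circ y=x+\lambda_x(y)$, every composite $\lambda_{x\circ y}$ via $\lambda_x\lambda_y$, and then simplify inside $(A,+)$ using the compatibility condition and the additivity of the maps $\lambda_x$. This is where the real work lies and where I expect the main obstacle to be: the second and third identities unfold into fairly long expressions that mix both operations, so the task is the careful reorganization of these expressions until the two sides visibly agree; there is no conceptual difficulty beyond this bookkeeping, and once it is carried out the theorem is proved.
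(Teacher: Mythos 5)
The paper itself gives no proof of this statement: it is imported verbatim from \cite[Theorem 3.1]{MR3647970}, so the only benchmark is the proof in that reference, whose overall strategy your outline reproduces. Your preliminary reductions are all correct: $a\circ 0=a$ from the compatibility axiom, each $\lambda_a\in\Aut(A,+)$, $\lambda_{a\circ b}=\lambda_a\lambda_b$, the identity $\sigma_a(b)\circ\tau_b(a)=a\circ b$, the equivalence of the braid relation with the three coordinate identities, and the verification of the first of them. So the skeleton is the standard one and would succeed.

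As written, though, there are two genuine gaps. First, non-degeneracy of $\tau_b$ is not actually established. The parenthetical fallback of exhibiting $r_A^{-1}$ does not suffice: bijectivity of $r$ together with bijectivity of every $\sigma_a$ does not force each $\tau_b$ to be a bijection (one can have $\tau_b(a_1)=\tau_b(a_2)$ with $\sigma_{a_1}(b)\neq\sigma_{a_2}(b)$ and $r$ still injective), and the claimed reduction to the bijectivity of $a\mapsto -a'$ is asserted rather than carried out. A clean repair: from the compatibility axiom, $a\circ(a'+b)=a\circ a'-a+a\circ b=\lambda_a(b)$, whence $\lambda_a(b)'=(a'+b)'\circ a'$ and $\tau_b(a)=(a'+b)'\circ a'\circ a\circ b=(a'+b)'\circ b$, a composite of two $\circ$-inversions and two right translations, hence a bijection in $a$. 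Second, the braid relation is not finished: of your three identities only the first is checked, and the second, $\tau_c\tau_b=\tau_{\tau_c(b)}\tau_{\sigma_b(c)}$, is precisely where the skew-brace axiom does its real work; deferring it as ``bookkeeping'' leaves the core of the theorem unproved. Note also that your third identity comes for free and need not be computed at all: since $\sigma_a(b)\circ\tau_b(a)=a\circ b$, both $r_1$ and $r_2$ preserve the $\circ$-product of the three coordinates, so once the first and third output coordinates of $r_1r_2r_1$ and $r_2r_1r_2$ agree, cancellation in the group $(A,\circ)$ forces the middle coordinates to agree. With that observation the remaining work is exactly one identity, but that identity must still be verified explicitly for the proof to be complete.
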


The second theorem shows that solutions associated to skew braces are, in some
sense, universal. Similar results are~\cite[Theorem 2.9]{MR1722951} for
involutive solutions, and~\cite[Theorem 9]{MR1769723} and~\cite[Theorem
2.7]{MR1809284} for non-involutive solutions.  Recall that the \emph{structure
group} of a solution $(X,r)$ is the group $G(X,r)$ generated by $\{x:x\in X\}$
with relations $xy=uv$ whenever $r(x,y)=(u,v)$.

\begin{thm*}{\cite[Theorem 4.5]{MR3763907}}
  Let $(X,r)$ be a non-degenerate solution of the Yang--Baxter equation. 
  Then there exists a unique skew left brace structure over the group $G(X,r)$ such
  that 
  \[
	  (\iota\times\iota)r=r_{G(x,r)}(\iota\times\iota),
%  \begin{tikzcd}
%	  X\times X\arrow[r, "r"] \arrow["\iota\times\iota"', d]
%	  & X\times X \arrow[d, "\iota\times\iota" ] \\
%	  G\times G \arrow[r, "r_{G}"]
%	  & G\times G
%  \end{tikzcd}
  \]
where $\iota\colon X\to G(X,r)$ is the canonical map. 
Moreover, the pair $(G(X,r),\iota)$ has the following universal property: if $B$
is a skew left brace and $f\colon X\to B$ is a map such that 
%\[
%  \begin{tikzcd}
%	  X\times X\arrow[r, "r"] \arrow["\iota\times\iota"', d]
%	  & X\times X \arrow[d, "\iota\times\iota" ] \\
%	  B\times B \arrow[r, "r_{B}"]
%	  & B\times B
%  \end{tikzcd}
%\]
$(f\times f)r=r_B(f\times f)$, 
then there exists a unique skew brace homomorphism
$\phi\colon G(X,r)\to B$ such that 
$f=\phi\iota$ and 
$(\phi\times\phi)r_{G(X,r)}=r_B(\phi\times\phi)$.  
%\[
%\begin{tikzcd}
%	X \arrow[rd, "f"'] \arrow[r, "\iota"] & G\arrow[d, "\phi"] \\
%	& B
%\end{tikzcd}
%\qquad
%\begin{tikzcd}
%	G\times G\arrow[r, "r_{G}"] \arrow["\phi\times\phi"', d]
%	  & G\times G \arrow[d, "\phi\times\phi" ] \\
%	  B\times B \arrow[r, "r_{B}"]
%	  & B\times B
%  \end{tikzcd}
%  \]
%commute.
\end{thm*}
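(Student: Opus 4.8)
The plan is to produce one compatible skew brace structure on $G(X,r)$ explicitly, to prove the universal property for it, and then to read off uniqueness of the structure from the universal property itself.

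\textbf{Step 1: a compatible skew brace structure on $G(X,r)$.} Write $r(x,y)=(\sigma_x(y),\tau_y(x))$, so that non-degeneracy gives $\sigma_x,\tau_x\in\operatorname{Sym}(X)$. The Yang--Baxter relation yields in particular the identity $\sigma_x\sigma_y=\sigma_{\sigma_x(y)}\sigma_{\tau_y(x)}$, and this says exactly that $x\mapsto\sigma_x$ respects the defining relations $xy=\sigma_x(y)\tau_y(x)$ of the structure group, hence extends to a homomorphism $\sigma\colon(G(X,r),\circ)\to\operatorname{Sym}(X)$. Following the classical analysis of structure groups of set-theoretic solutions (Etingof--Schedler--Soloviev in the involutive case, Lu--Yan--Zhu and Soloviev in general; see~\cite{MR1722951,MR1769723,MR1809284}), this action underlies a bijective $1$-cocycle $\pi\colon(G(X,r),\circ)\to A$ onto a suitable group $A$ (the structure group of the derived solution), with $\pi(\iota(x))$ the corresponding generator of $A$. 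Transporting the multiplication of $A$ across $\pi$ defines an operation $+$ on the set $G(X,r)$; a routine computation from the cocycle identity $\pi(g\circ h)=\pi(g)+{}^{g}\pi(h)$ yields the compatibility $a\circ(b+c)=a\circ b-a+a\circ c$, and one reads off that $\lambda_g:=(h\mapsto-g+g\circ h)$ restricts on the generators to $\sigma_g$. With this structure, $(\iota\times\iota)r=r_{G(X,r)}(\iota\times\iota)$ is immediate: in the first coordinate it reads $-\iota(x)+\iota(x)\circ\iota(y)=\iota(\sigma_x(y))$, which is the just-noted $\lambda_{\iota(x)}(\iota(y))=\iota(\sigma_x(y))$; in the second coordinate it reads $\iota(x)\circ\iota(y)=\iota(\sigma_x(y))\circ\iota(\tau_y(x))$, which is a defining relation of $G(X,r)$ and so holds automatically. (Here one uses that $r_A(a,b)=(u,v)$ always forces $u\circ v=a\circ b$, directly from the displayed formula for $r_A$.)

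\textbf{Step 2: the homomorphism.} Let $B$ be a skew brace and $f\colon X\to B$ a map with $(f\times f)r=r_B(f\times f)$. A skew brace homomorphism $G(X,r)\to B$ is in particular a group homomorphism $(G(X,r),\circ)\to(B,\circ)$, and since $\iota(X)$ generates $(G(X,r),\circ)$ there is at most one with $\phi\iota=f$. Such a $\circ$-homomorphism exists precisely when the elements $f(x)$ satisfy the defining relations, i.e. $f(x)\circ f(y)=f(\sigma_x(y))\circ f(\tau_y(x))$; but $r_B(f(x),f(y))=(f\times f)(r(x,y))=(f(\sigma_x(y)),f(\tau_y(x)))$, and applying ``$u\circ v=a\circ b$ whenever $r_B(a,b)=(u,v)$'' gives exactly this identity. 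So a unique $\circ$-homomorphism $\phi$ with $\phi\iota=f$ exists.

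\textbf{Step 3: $\phi$ is additive (the main obstacle).} A short manipulation with the skew brace axioms shows that, for the $\circ$-homomorphism $\phi$ of Step 2, additivity is equivalent to the intertwining $\phi\circ\lambda_g=\lambda_{\phi(g)}\circ\phi$ for all $g$, and also to $(\phi\times\phi)r_{G(X,r)}=r_B(\phi\times\phi)$ (so proving any one of these finishes the theorem, the $r$-compatibility being automatic once $\phi$ is a skew brace homomorphism). On the generators the intertwining is nothing but $(f\times f)r=r_B(f\times f)$ read through $\phi\iota=f$ and $(\iota\times\iota)r=r_{G(X,r)}(\iota\times\iota)$. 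The difficulty is to propagate it to all of $G(X,r)$: $\lambda_g$ is an automorphism of $(G(X,r),+)$, not of $(G(X,r),\circ)$, so the multiplicative presentation does not see it and a naive induction runs in circles. I would instead use the additive side of Step 1: $(G(X,r),+)$ is the structure group $A$ of the derived solution and hence has a presentation with generating set $\iota(X)$; the hypothesis on $f$ forces the $f(x)$, viewed in $(B,+)$, to satisfy these derived relations, so $f$ extends to an additive homomorphism $(G(X,r),+)\to(B,+)$, which must agree with $\phi$ since both are determined on $\iota(X)$. (Equivalently: transport the whole picture across the bijective cocycles of Step 1 and verify that $\phi$ becomes a morphism of cocycles.) I expect this matching-up of the two presentations to be the only substantive point; everything else is bookkeeping with the skew brace axioms and the definition of $r_A$.

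\textbf{Step 4: uniqueness of the structure.} Finally, suppose $+'$ is another skew brace operation on the group $(G(X,r),\circ)$ with $(\iota\times\iota)r=r'(\iota\times\iota)$, where $r'$ is its associated solution. Applying the universal property proved above (for the structure $+$) to $B=(G(X,r),+')$ and $f=\iota$, the unique skew brace homomorphism $\phi\colon(G(X,r),+)\to(G(X,r),+')$ with $\phi\iota=\iota$ is a $\circ$-homomorphism fixing the generators $\iota(X)$, hence $\phi=\mathrm{id}$; being a skew brace homomorphism, $\phi$ is additive, which says $g+h=g+'h$ for all $g,h$. Thus $+'=+$, and the compatible skew brace structure is unique.
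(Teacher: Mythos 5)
This statement appears in the paper only as quoted background: it is \cite[Theorem 4.5]{MR3763907}, stated with a citation and no proof, so there is no in-paper argument to compare yours against; I can only assess your outline on its own terms. Your overall route is the standard one (the Lu--Yan--Zhu/Soloviev bijective $1$-cocycle onto the derived structure group for existence, presentation arguments for the universal property, and the universal property for uniqueness of the structure), and Steps 2 and 4 are correct and cleanly done --- in particular the observation that $r_B(a,b)=(u,v)$ forces $u\circ v=a\circ b$, and the derivation of uniqueness of $+$ from the universal property.

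The genuine gap is in Step 3, which you yourself flag as the crux but do not actually close. Two things are missing. First, the claim that the hypothesis $(f\times f)r=r_B(f\times f)$ forces the elements $f(x)$ to satisfy, in $(B,+)$, the defining relations of the derived structure group $A(X,r)$ is asserted rather than verified; this is a real computation (one must identify the derived relations and check them against the skew brace axioms in $B$), and it is where the content of the theorem lives. Second, and more seriously, even granting an additive extension $\psi\colon (G(X,r),+)\to(B,+)$ of $f$, your conclusion that $\psi$ ``must agree with $\phi$ since both are determined on $\iota(X)$'' is a non sequitur: $\phi$ is determined by its values on $\iota(X)$ as a homomorphism of $(G(X,r),\circ)$, while $\psi$ is determined by those values as a homomorphism of $(G(X,r),+)$, and two maps determined on the same generating set with respect to \emph{different} operations need not coincide. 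Showing that they do coincide is equivalent to the intertwining identity $\phi\lambda_g=\lambda_{\phi(g)}\phi$ (equivalently, to $\psi$ being multiplicative), which is exactly the statement you set out to prove; so as written the argument is circular at the decisive step. To repair it you must run a genuine simultaneous induction on word length using both presentations (or transport everything across the bijective cocycle and verify that $\phi$ is a morphism of cocycles in detail), rather than appeal to agreement on generators.
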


This theorem allows us to define $G(X,r)$ as the structure skew brace of the
solution $(X,r)$.  Clearly, skew braces are useful for understanding
non-degenerate set-theoretic solutions of the Yang--Baxter equation. Moreover,
to study finite solutions one only needs finite skew braces, see~\cite[Theorem
3.11]{Bachiller3}. Hence, since skew braces generalize radical rings, tools and
ideas from ring theory can be used to study the Yang--Baxter equation.  

Braces and skew braces have a strong connection with
regular subgroups, see for example~\cite[Proposition
2.3]{MR3465351},~\cite[Theorem 1]{MR2486886} and~\cite[Theorem 4.2]{MR3647970}.
Based on this fact, an algorithm for constructing all skew braces of a given
size was developed in~\cite{MR3647970}.  Using it, one produces a huge database of all (skew)
braces of a given order. 

The first and the third author produced the~\textsf{GAP}
package~\textsf{YangBaxter} that implements several methods for studying skew
braces and other structures related to the set-theoretic Yang--Baxter equation. 
The package 
contains a database of classical and skew braces of small orders and 
it is freely available at
\verb+http://gap-packages.github.io/YangBaxter/+.

\medskip
The paper is organized as follows. In Section~\ref{representations} we
introduce combinatorial representations of skew braces; this concept is needed
to store small skew braces in a database.  In Sections~\ref{ideals}
and~\ref{series} we study ideals and some particular series of ideals of skew
braces; these sections contain several examples that answer some natural
questions.  Section~\ref{prime} is devoted to study prime and semiprime ideals
and related concepts such as the Baer radical and the Wedderburn radical of a
skew brace. This section contains some of our main results. In
Theorem~\ref{thm:B(A)=0<=>semiprime} we prove that a skew brace is semiprime if
and only if its Baer radical is zero.  Theorem~\ref{thm:Baer} proves that the
Baer radical of a skew brace is the intersection of all its prime ideals. In
Theorem~\ref{thm:subdirect} we prove that every semiprime skew brace is a
subdirect product of prime skew braces. A relation between the Wedderburn and
the Baer radical is stated in Theorem~\ref{thm:B(A)=0<=>W(A)=0}. 
Solvable ideals of skew braces are studied
in Section~\ref{solvable}. One of our main results is Theorem~\ref{main}, where
it is proved that a finite skew brace is solvable if and only if it is Baer
radical.

\section{Combinatorial representations of finite skew braces}
\label{representations}

When storing skew braces in a database, an obvious question is how to represent
them efficiently. Obviously, each skew brace can be given
by the tables for addition and multiplication, but that would cause a substantial
overhead. On the other hand, one can substantially reduce the storage size by
keeping only generators for the additive and multiplicative groups of a skew brace,
and recording a way to reconstruct its full structure. This process should be 
deterministic and should not depend on some randomized algorithms. If we
store additive and multiplicative groups as permutation groups, we can rely on
the lexicographic ordering of permutations and store skew braces as explained below.

\begin{defn}
\label{rem:order}
For permutations $f$ and $g$, 
$f<g$ 
if and only if the image of $f$ on the range from $1$ to the degree of $f$ is lexicographically smaller than the corresponding image for $g$. 
\end{defn}

Recall from ~\cite[Proposition 1.11]{MR3647970} that a skew brace of size $n$ 
with additive group $A$ is equivalent to a pair $(G,\pi)$ where $G$ is a group acting by automorphisms on $A$ 
and $\pi \colon G \to A$ is a bijective $1$-cocycle.
Without loss of generality we can write 
$G=\{g_1,g_2,\dots,g_n\}$ and $A=\{a_1,a_2,\dots,a_n\}$ as permutation groups 
and assume that 
$\pi(g_j)=a_j$ for all $j\in\{1,\dots,n\}$. Then the skew brace
is the additive group $A=\{a_1,\dots,a_n\}$ with
the multiplication 
\[
a_i\circ a_j=a_k,
\]
where $g_ig_j=g_k$. This means that to store our skew brace we only need
these two tuples of permutations $(a_1,a_2,\dots,a_n)$ and $(g_1,g_2,\dots,g_n)$.
Observe the use of tuples is very important because it implies that elements
of $G$ and $A$ are listed in a particular order, determined by the bijection $\pi$.

This way, we will need $2n$ permutations to store a brace of size $n$. We can try to be 
more efficient by storing generating sets of groups $G$ and $A$, together with the data needed
to recover the tuples $(a_1,a_2,\dots,a_n)$ and $(g_1,g_2,\dots,g_n)$.
To recover these tuples, first we use an algorithm that constructs the lists of all elements of the
groups $G$ and $A$ from the chosen generating sets, and then sort each of the resulting lists
in lexicographic order (see Definition~\ref{rem:order}).
So we obtain
\[
a_{\sigma(1)}<a_{\sigma(2)}<\cdots<a_{\sigma(n)},\quad
g_{\tau(1)}<g_{\tau(2)}<\cdots<g_{\tau(n)},
\]
where $\sigma$ and $\tau$ are some permutations of $\{1,\dots,n\}$. These translate
into two tuples $( a_{\sigma(1)}, a_{\sigma(2)}, \cdots , a_{\sigma(n)} )$ and
$(g_{\tau(1)},g_{\tau(2)},\cdots,g_{\tau(n)})$.
Acting with the inverses of $\sigma$ and $\tau$ we recover the 
tuples $(a_1,a_2,\dots,a_n)$ and $(g_1,g_2,\dots,g_n)$ respectively.

Note that the generating sets of $G$ and $A$
do not have to be of a minimal size, although for practical purposes it is
useful to choose them as small as possible.
% If X and Y are lists of all elements of $(A,+)$ and $(A,\circ)$
% it is still better than keeping full tables for both operations!

\subsection*{A database of small skew braces}

Motivated
by~\cite{MR1935567} and 
using the algorithm described in~\cite{MR3647970}, one constructs a database of
small (skew) braces.  Thanks to the representation described in the previous
section, we were able to reduce the size of the database from more than 300 MB
in the initial representation (which kept full lists of elements of permutation
representation of the additive and multiplicative group of a skew brace) to
less than 30 MB. 

At the present moment,
the database contains all (up to isomorphism) skew braces of sizes up to 85 except
some orders including large prime powers, e.g. 32, 64, etc.
and all (up to isomorphism) classical braces of sizes up to 127 except 32, 64, 81 and 96.
In total, it included 96830 skew braces and 8828 classical braces. 

Each classical brace (respectively skew brace) is
named by their library index as $B_{n,k}$ (resp. $S_{n,k}$), where $n$ is its
size and $k$ is its index in the database of braces of size $n$. For example,
the list of skew braces of size eight is $S_{8,1}, S_{8,2}, \dots, S_{8,47}$, and the list of 
classical ones is $B_{8,1}, B_{8,2}, \dots, B_{8,27}$.

% In GAP, it can be retrieved from the database using the 
% command {\tt SmallBrace(n,k)} (respectively {\tt SmallBrace(n,k)}).

The number $s(n)$ of isomorphism classes of skew braces and $b(n)$ of classical braces 
for $n \le 16$ is given in Table~\ref{brace-db}.

\begin{table}
\caption{Number of skew and classical braces for  $n \le 16$.}
\label{brace-db}
\begin{tabular}{|r|cccccccc|}
\hline 
$n$ & 1 & 2 & 3 & 4 & 5 & 6 & 7 & 8\tabularnewline
$s(n)$ & 1 & 1 & 1 & 4 & 1 & 6 & 1 & 47\tabularnewline
$b(n)$ & 1 & 1 & 1 & 4 & 1 & 2 & 1 & 27\tabularnewline
\hline 
$n$ & 9 & 10 & 11 & 12 & 13 & 14 & 15 & 16\tabularnewline
$s(n)$ & 4 & 6 & 1 & 38 & 1 & 6 & 1 & 1605\tabularnewline
$b(n)$ & 4 & 2 & 1 & 10 & 1 & 2 & 1 & 357\tabularnewline
\hline 
\end{tabular}
\end{table}

% TODO put an appendix with table(s) with total numbers of classical and skew braces
% stored in the database, and some properties (e.g. two-sided) 

\subsection*{An application to two-sided skew braces}
\label{twosided}

In~\cite[Question 2.1(2)]{CDS} one finds the following interesting question: 
Is it true that any brace such that the operation $a*b=-a+a\circ b-b$ is associative
is a two-sided brace? 
We check that the answer is affirmative for all the classical braces of
our database.  We know from~\cite[Proposition 2.2]{CDS} that we only need to
check classical braces of even size. We have tested
all such classical braces in our database and we found no answer to this question.

%\begin{lstlisting}
%gap> l := Filtered([1..127], \
%> n->IsBraceed(n) and n mod 2 = 0);;
%gap> Sum(l, n->NrSmallBraces(n));
%8108
%\end{lstlisting}
%
%Now we need a function that checks whether the star operation associative:
%
%\begin{lstlisting}
%gap> IsStarAssociative := function(obj)
%> local a,b,c;
%> for a in obj do
%> for b in obj do
%> for c in obj do
%> if Star(a,Star(b,c)) <> Star(Star(a,b),c) then
%> return false;
%> fi;
%> od;
%> od;
%> od;
%> return true;
%> end;
%function( obj ) ... end
%\end{lstlisting}
%
%And we inspect the database:
%
%\begin{lstlisting}
%gap> m := 0;;
%gap> for n in l do
%> for k in [1..NrSmallBraces(n)] do
%> obj := SmallBrace(n,k);
%> if IsStarAssociative(obj) and IsTwoSided(obj) then
%> m := m+1;
%> fi;
%> od;
%> od;
%\end{lstlisting}

What happens if we ask the same question for skew braces?  Now it turns out
that indeed we have an answer! The smallest skew braces which are not two-sided
and have an associative $*$ operation are
\[
S_{16,j},\quad
j\in\{230,235,424,429,547,554,556,561\}.
\]
It is interesting to observe that the additive groups of these skew braces are
nilpotent.  Since skew braces with nilpotent additive groups are almost like
classical braces, these examples of size 16 suggest that one should expect an
answer to Question~\cite[Question 2.1(2)]{CDS} in the positive.

%Here is the code:
%
%\begin{lstlisting}
%gap> r := [];;
%gap> l := Filtered([1..16], n->n mod 2 = 0);;
%gap> for n in l do
%> for k in [1..NrSmallSkewBraces(n)] do
%> obj := SmallSkewBrace(n,k);
%> if IsStarAssociative(obj) and not IsTwoSided(obj) then
%> Add(r, [n,k]);
%> fi;
%> od;
%> od;
%gap> r; 
%[ [ 16, 230 ], [ 16, 235 ], [ 16, 424 ], [ 16, 429 ], 
%  [ 16, 547 ], [ 16, 554 ], [ 16, 556 ], [ 16, 561 ] ]
%\end{lstlisting}

\section{Ideals of skew braces}
\label{ideals}

Since skew braces are generalizations of radical rings, one can try to
exploit ideas from ring theory. Let us first recall a very useful lemma:

\begin{lem}
\label{lambda}
Let $A$ be a skew brace. Then $\lambda\colon (A,\circ)\to\Aut(A,+)$ given by
$a\mapsto\lambda_a$, where $\lambda_a(b)=-a+a\circ b$, is a well-defined group homomorphism.
\end{lem}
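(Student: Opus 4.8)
The plan is to unwind the definition and reduce everything to the single compatibility axiom $a\circ(b+c)=a\circ b-a+a\circ c$. There are three assertions packed into the statement: (i) each $\lambda_a$ is an endomorphism of $(A,+)$; (ii) each $\lambda_a$ is bijective, so that $\Aut(A,+)$ is the correct codomain (this is what ``well-defined'' amounts to here); and (iii) $\lambda_{a\circ b}=\lambda_a\lambda_b$. I would first record the standard preliminary fact that the additive and multiplicative identities of a skew brace coincide: setting $b=c=0$ in the axiom gives $a\circ 0=a\circ 0-a+a\circ 0$, hence $a=a\circ 0$, and a symmetric computation gives $0\circ a=a$; consequently $0$ is the $\circ$-identity, $\lambda_0=\id$, and $\lambda_a(0)=-a+a\circ 0=0$.

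For (i), I would simply compute $\lambda_a(b+c)=-a+a\circ(b+c)$ and apply the compatibility condition to rewrite the right-hand side as $(-a+a\circ b)+(-a+a\circ c)=\lambda_a(b)+\lambda_a(c)$, the middle $a$ cancelling with $-a$. Note that since $(A,+)$ need not be abelian one must keep all sums in the given order throughout; no reordering is ever needed. From additivity it also follows that $\lambda_a(-b)=-\lambda_a(b)=-(-a+a\circ b)=a-a\circ b$.

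Step (iii) is the computational heart. Starting from $\lambda_a\bigl(\lambda_b(c)\bigr)=\lambda_a(-b+b\circ c)$, I would use additivity from (i) to split this as $\lambda_a(-b)+\lambda_a(b\circ c)$, then substitute $\lambda_a(-b)=a-a\circ b$ and $\lambda_a(b\circ c)=-a+a\circ(b\circ c)$; after the cancellation $a-a$ this equals $-(a\circ b)+(a\circ b)\circ c=\lambda_{a\circ b}(c)$, as desired. Finally (ii) comes for free: by (i) and (iii) the map $a\mapsto\lambda_a$ sends $(A,\circ)$ multiplicatively into the monoid $\End(A,+)$, and for each $a$ with $\circ$-inverse $a'$ we get $\lambda_a\lambda_{a'}=\lambda_{a\circ a'}=\lambda_0=\id$ and likewise $\lambda_{a'}\lambda_a=\id$, so each $\lambda_a$ is invertible, hence an automorphism; therefore $\lambda$ is a group homomorphism $(A,\circ)\to\Aut(A,+)$.

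I do not expect any real obstacle here: the proof is a short manipulation of the defining axiom. The only points requiring a little care are the bookkeeping of additive inverses in the non-abelian additive group (keeping expressions such as $-b+b\circ c$ in that exact order) and the preliminary identification of the two identities, which is what makes $\lambda_0=\id$ and hence gives invertibility of each $\lambda_a$.
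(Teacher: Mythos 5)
Your argument is essentially correct and complete, but it is worth noting that the paper does not prove this lemma at all: it simply cites \cite[Corollary 1.10]{MR3763276}. So your direct verification from the single compatibility axiom is a genuinely self-contained alternative, and its structure --- identify $0$ as the $\circ$-identity, prove additivity of each $\lambda_a$, prove multiplicativity of $\lambda$, and deduce bijectivity of each $\lambda_a$ from $\lambda_a\lambda_{a'}=\lambda_0=\id$ --- is exactly the standard one. The only real issue is a bookkeeping slip of precisely the kind you warn against at the end: in a non-abelian additive group $-(x+y)=-y-x$, so
\[
\lambda_a(-b)=-\lambda_a(b)=-(-a+a\circ b)=-(a\circ b)+a,
\]
\emph{not} $a-a\circ b$ as you wrote. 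With your (incorrect) form the advertised ``cancellation $a-a$'' in step (iii) does not actually occur, since $-(a\circ b)$ sits between $a$ and $-a$; with the correct form the computation goes through exactly as you intend:
\[
\lambda_a(\lambda_b(c))=\bigl(-(a\circ b)+a\bigr)+\bigl(-a+(a\circ b)\circ c\bigr)=-(a\circ b)+(a\circ b)\circ c=\lambda_{a\circ b}(c).
\]
A second, smaller point: $0\circ a=a$ is not obtained by a ``symmetric computation'' in the axiom (the axiom is not symmetric in the relevant way); rather, $a\circ 0=a$ shows $0$ is a right identity of the group $(A,\circ)$ and hence equals its identity element. Both are trivially repaired, and the rest of the proof is sound.
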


\begin{proof}
    See~\cite[Corollary 1.10]{MR3763276}.
\end{proof}

An \emph{ideal} of a skew brace $A$ is a normal
subgroup $I$ of the multiplicative group of $A$ such that
$\lambda_a(I)\subseteq I$ and $a+I=I+a$ for all $a\in A$. The following easy 
lemma is useful for computational purposes:

\begin{lem}
\label{lem:ideals}
	Let $A$ be a skew brace and $I$ be a subset of $A$. Then $I$ is an ideal if
	and only if $I$ is a normal subgroup of the additive group of $A$, $a\circ
	I=I\circ a$ and $\lambda_a(I)\subseteq I$ for all $a\in A$.
\end{lem}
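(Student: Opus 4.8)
The plan is to prove the two implications separately, moving between the ``ideal'' definition (normal in $(A,\circ)$, $\lambda$-invariant, and $a+I=I+a$) and the alternative characterization (normal in $(A,+)$, $\lambda$-invariant, and $a\circ I=I\circ a$). The $\lambda$-invariance condition appears in both formulations, so it will be carried along unchanged; the real content is the equivalence, under $\lambda$-invariance, between [$I$ normal in $(A,\circ)$ and $a+I=I+a$] and [$I$ normal in $(A,+)$ and $a\circ I=I\circ a$]. First I would record the basic identities relating the two operations that follow from the compatibility axiom: $a\circ b=a+\lambda_a(b)$, and hence $a\circ b-a\circ c=a+\lambda_a(b)-a-(a+\lambda_a(c)-a)$, so that $a\circ b\circ c^{-1}$-type expressions can be rewritten additively via $\lambda$. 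In particular, for $x\in A$ and $i\in I$ one has $x\circ i\circ x'=x+\lambda_x(i)+\big(\text{something in the additive normal closure of }\lambda_x(I)\big)$; making this precise is the key computational lemma.

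For the forward direction, assume $I$ is an ideal in the sense of the definition. I would first show $I$ is a \emph{subgroup} of $(A,+)$: since $0\in I$ and $I$ is a subgroup of $(A,\circ)$ with $0$ the common identity, the condition $a+I=I+a$ together with closure under $\circ$ and $\lambda$ should force additive closure — concretely, for $i,j\in I$, write $i+j$ using $i\circ j = i+\lambda_i(j)$ and the fact that $\lambda_i(j)\in I$, then solve for $i+j$. Then normality of $I$ in $(A,+)$ follows from $a+I=I+a$ (conjugates $a+i-a$ lie in $-a+(I+a)=-a+(a+I')$ for suitable $I'$, hence in $I$). Finally $a\circ I=I\circ a$: write $a\circ i = a+\lambda_a(i)$ and $i'\circ a = i'+\lambda_{i'}(a)$ for $i,i'\in I$; using $a+I=I+a$ to move $a$ past the element of $I$, and $\lambda$-invariance plus $\circ$-normality to absorb the remaining $\lambda$-terms, one matches the two sets. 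The reverse direction is symmetric in structure: from $I$ normal in $(A,+)$, $\lambda$-invariant, and $a\circ I=I\circ a$, one recovers that $I$ is a $\circ$-subgroup (using $i\circ j=i+\lambda_i(j)$ with $\lambda_i(j)\in I$ and additive closure), that it is $\circ$-normal (from $a\circ I=I\circ a$ directly), and that $a+I=I+a$ (translate the $\circ$-normality back through $a\circ i=a+\lambda_a(i)$ and $\lambda$-invariance).

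I expect the main obstacle to be the careful bookkeeping of where the $\lambda_a$-correction terms land when converting between $a\circ i$ and $a+i$: the identity $a\circ i = a+\lambda_a(i)$ is clean, but expressions like $a\circ i\circ a'$ unfold to $a+\lambda_a(i)+\lambda_{a\circ i}(a')$ or similar, and one must repeatedly use $\lambda$-invariance of $I$ (so $\lambda_a(i)\in I$) and the already-established additive normality to collapse these back into $I$. Organizing this as a short chain of substitutions — first prove additive/multiplicative subgroup, then normality, then the coset-commutation condition — should keep each step to a one- or two-line computation, and the symmetry of the two operations under the dictionary $x\circ y = x+\lambda_x(y)$ lets the reverse implication reuse essentially the same manipulations. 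I would likely just cite or mimic the analogous classical-brace computation and note that none of the steps used commutativity of $(A,+)$, so they carry over verbatim to skew braces.
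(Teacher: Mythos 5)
Your proposal is correct and follows essentially the same route as the paper: the entire content is the dictionary $a\circ b=a+\lambda_a(b)$ combined with $\lambda$-invariance of $I$, which is exactly the computation $x\circ y'=x+\lambda_x(-\lambda_{y'}(y))\in I$ that the paper uses for the converse (the forward implication is simply cited from Lemma~2.3(1) of the Guarnieri--Vendramin paper). One remark that simplifies your bookkeeping: once $I$ is known to be a subgroup of $(A,\circ)$ (resp.\ of $(A,+)$), the condition $a\circ I=I\circ a$ (resp.\ $a+I=I+a$) is nothing but the statement that $I$ is normal in that group, so no $\lambda$-correction terms ever need to be tracked there --- the only computations required are $i+j=i\circ\lambda_{i'}(j)$, $-i=\lambda_i(i')$, $i\circ j=i+\lambda_i(j)$ and $i'=-\lambda_{i'}(i)$.
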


\begin{proof}
    Assume first that $I$ is an ideal of $A$. Then the claim follows from \cite[Lemma 2.3(1)]{MR3647970}.
    To prove the converse we need to show that 
    $I$ is a subgroup of the multiplicative group of $A$. For $x,y\in I$, 
    \[
        x\circ y'=x\lambda_x(-\lambda_{y'}(y))\in I
    \]
    and hence the claim follows. 
\end{proof}

The \emph{socle} of a skew brace $A$ is defined as 
$\Soc(A)=\ker\lambda\cap Z(A,+)$ and it is an ideal of $A$. 
A skew brace  $A$ is said to be \emph{trivial} if $a+b=ab$ for all $a,b\in A$.

%In~\cite[Corollary 8]{MR3765444} it is proved that the socle of a
%non-trivial finite classical brace whose size is a cube-free number is
%non-zero.  The following example shows that one cannot extend this 
%directly to skew braces:

\begin{exa}
	Let $A=S_{6,1}$, the trivial skew brace over $\Sym_3$. 
	Then $\Soc(A)=0$ because $\Sym_3$ has a trivial center.
\end{exa}

Naturally, one can quotient out skew braces by ideals to produce new skew braces.
Using the map $\lambda$ from Lemma~\ref{lambda} one shows that $I$ is a normal
subgroup of the additive group of $A$ (see Lemma~\ref{lem:ideals}) and that
for every $a \in A$ we have $a \circ I = a + I$. Then it follows that $A/I$ is a 
skew brace.

\begin{exa}
    Let $A=B_{8,5}$. This is the only classical brace of size eight with 
    additive group is isomorphic to $C_8$ and
    multiplicative group isomorphic to $C_4\times C_2$. It has four ideals which are
    isomorphic to $0$,  $B_{2,1}$, $B_{4,1}$ and $B_{8,5}$. The quotients of $A$ are then
    isomorphic to $B_{8,5}$, $B_{4,2}$, $B_{2,1}$ and $0$.
\end{exa}

%\emph{TODO: "permutation representation" below - discuss notation}
%Let $A$ be a skew brace and $I$ be an ideal of $A$. Let
%$(a_1,a_2,\dots,a_n;g_1,g_2,\dots,g_n)$ be a permutation representation of $A$.
%Let us compute the quotient $A/I$:
%For $x\in A$ there exists $a\in A$ such that $r(a)=x$. We define
%$\rho(x)=s(p(a))$.

A \emph{left ideal} $I$ of $A$ is a subgroup $I$ of the additive group
of $A$ such that $\lambda_a(I)\subseteq I$ for all $a\in A$.

\begin{exa}
	Let $A=B_{6,2}$, the only classical brace of size six
	with additive and multiplicative
	groups isomorphic to $C_6$. It has four left ideals which 
	are isomorphic to $0$, $B_{2,1}$, $B_{3,1}$ and $B_{6,2}$
	(in fact, all of them are two sided ideals in $A$).
\end{exa}

\begin{exa}
	Let $A=B_{6,1}$, the only non-trivial classical brace of size six
	with additive group isomorphic to $C_6$ and multiplicative
	groups isomorphic to $S_3$. It has one left ideal of size two,
	which is not a two-sided ideal.
\end{exa}

%However, a computer check shows that 
%an analog of ~\cite[Corollary 8]{Smoktunowicz} for skew braces 
%holds for all skew braces in our database under an additional assumption:
%
%\begin{pro}
%	Let $2 \le n \le 64$ be a cube-free number. Each skew brace of size
%	$n$ with nilpotent additive group has a non-zero socle.
%\end{pro}

Let $I$ and $J$ be ideals
of a skew brace $A$. Then $I\cap J$ is an ideal of $A$. 
The sum 
$I+J$ of $I$ and $J$ is defined as the
additive subgroup of $A$ 
generated by all the 
elements of the form
$u+v$, $u\in I$ and $v\in J$. 

\begin{lem}
Let $A$ be a skew brace and let
$I$ and $J$ be ideals of $A$. Then $I+J$ is an ideal of $A$.
\end{lem}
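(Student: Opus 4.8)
The plan is to verify, for the set $K=I+J$, the three conditions of Lemma~\ref{lem:ideals}: that $K$ is a normal subgroup of $(A,+)$, that $a\circ K=K\circ a$ for all $a\in A$, and that $\lambda_a(K)\subseteq K$ for all $a\in A$. First I would replace the generated subgroup $I+J$ by the concrete set $\{u+v:u\in I,\ v\in J\}$: since $I$ and $J$ are normal subgroups of $(A,+)$ (this is built into the notion of ideal, cf.\ Lemma~\ref{lem:ideals}), the standard argument that a product of two normal subgroups is a normal subgroup applies verbatim, so this set is already a normal subgroup of $(A,+)$ and equals $I+J$. That settles the first condition.

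The crucial point for the other two is the observation that $I+J$ equals, as a set, $I\circ J=\{u\circ v:u\in I,\ v\in J\}$. This follows from the identity $p\circ q=p+\lambda_p(q)$ (a restatement of $\lambda_p(q)=-p+p\circ q$) together with the fact that $I$ and $J$ are left ideals, so that $\lambda_u(J)=J$ and $\lambda_{u'}(v)\in J$; one gets $u\circ v=u+\lambda_u(v)\in I+J$ and, conversely, $u+v=u\circ\lambda_{u'}(v)\in I\circ J$. Since $I$ and $J$ are also normal subgroups of $(A,\circ)$, their product $I\circ J$ is a normal subgroup of $(A,\circ)$, whence $a\circ(I+J)=(I+J)\circ a$ for all $a\in A$, which is the second condition.

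For the third condition I would use that each $\lambda_a$ is an automorphism of $(A,+)$ (Lemma~\ref{lambda}) and that $\lambda_a(I)=I$ and $\lambda_a(J)=J$ — the inclusions $\lambda_a(I)\subseteq I$ and $\lambda_{a'}(I)\subseteq I$ combined with $\lambda_a\lambda_{a'}=\id$ give equality, and similarly for $J$ — so that $\lambda_a(I+J)=\lambda_a(I)+\lambda_a(J)=I+J$. Lemma~\ref{lem:ideals} then gives the claim.

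I do not expect a genuine obstacle; the one step deserving care is the identification $I+J=I\circ J$, since it is what reconciles the additive definition of the sum of ideals with the requirement of normality in the multiplicative group. Everything else is elementary group theory and unwinding the definitions.
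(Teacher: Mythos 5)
Your proof is correct, and for the multiplicative half it takes a genuinely different route from the paper's. Both arguments hinge on the same identity $u+v=u\circ\lambda_u^{-1}(v)$ (equivalently $u\circ v=u+\lambda_u(v)$), but you use it to establish the set equality $I+J=I\circ J$ and then obtain $a\circ(I+J)=(I+J)\circ a$ for free from the group-theoretic fact that the product of two normal subgroups of $(A,\circ)$ is again a normal subgroup. The paper instead stays additive throughout: it computes $(u+v)*a=(u\circ\lambda_u^{-1}(v))*a=u*(\lambda_u^{-1}(v)*a)+\lambda_u^{-1}(v)*a+u*a\in I+J$, feeds this into $a\circ x\circ a'=a+\lambda_a(x+x*a')-a$ to conclude $a\circ(I+J)\circ a'\subseteq I+J$, and checks additive normality by an explicit conjugation computation on sums $\sum_k(u_k+v_k)$ rather than by quoting the product-of-normal-subgroups fact. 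Your version is shorter and more conceptual; the paper's has the side benefit of exhibiting the expansion of $(u+v)*a$. The steps you single out as needing care do go through: $\lambda_a(I)=I$ follows from $\lambda_{a'}=\lambda_a^{-1}$ together with $\lambda$-invariance of $I$, and the generated subgroup $I+J$ does coincide with the product set $\{u+v:u\in I,\ v\in J\}$ because ideals are normal in $(A,+)$ by Lemma~\ref{lem:ideals}.
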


\begin{proof}
    Let $a\in A$, $u\in I$ and $v\in J$. Then $\lambda_a(u+v)\in I+J$ and
    hence it follows that $\lambda_a(I+J)\subseteq I+J$. Moreover, 
    \[
        (u+v)*a=(u\circ\lambda^{-1}_u(v))*a
        =u*(\lambda^{-1}_u(v)*a)+\lambda^{-1}_u(v)*a+u*a\in I+J.
    \]
    This formula implies that  
    \[
        a\circ (u+v)\circ a'=a+\lambda_a((u+v)+(u+v)*a')-a\in I+J.
    \]
    Thus it follows that $a\circ (I+J)\circ a'\subseteq I+J$.
    
    Finally $I+J$ is a normal subgroup of $(A,+)$ since
    \[
        a+\left(\sum_{k} u_k+v_k\right)-a=\sum_k ((a+u_k-a)+(a+v_k-a))\in I+J
    \]
    whenever $u_k\in I$ and $v_k\in J$ for all $k$. 
\end{proof}

For $a,b\in A$ we write
$a*b=\lambda_a(b)-b$. For subsets $X$ and $Y$ of 
$A$ we write $X*Y$ to denote the subgroup
of $(A,+)$ generated by $\{x*y:x\in X,y\in Y\}$. 

\begin{exa}
Let $A=B_{8,18}$. It has three ideals which are isomorphic to $0$, $B_{4,3}$ or
$A$. Let $I$ be the ideal isomorphic to $B_{4,3}$. Since $A$ has no ideals of
size two, the subset $A*I$ of size two cannot be an ideal of $A$.
\end{exa}

\section{Series of ideals}
\label{series}

Following Rump \cite{MR2278047}, one defines 
the \emph{left series} of a skew brace $A$
recursively by $A^1=A$ and $A^{n+1}=A*A^n$ for $n\geq1$. Each $A^n$ 
is a left ideal of $A$. The following example
shows that in general $A^n$ is not a normal subgroup
of the additive group of $A$:

\begin{exa}
Let $A=S_{36,191}$. The left series of $A$ is $A^1=A$, $A^2\simeq S_{18,22}$
and $A^3\simeq B_{3,1}$. Then the additive group $A^3$ is not normal 
in the additive group of $A$. Indeed, the additive group of $A$
contains no normal subgroup of order three. 
\end{exa}

Similarly the \emph{right series} of $A$ is defined by $A^{(1)}=A$ and
$A^{(n+1)}=A^{(n)}*A$ for $n\geq1$. Each $A^{(n)}$ is an ideal of $A$.  A skew
brace $A$ is said to be \emph{left nilpotent} (resp. \emph{right nilpotent}) if
$A^n=0$ (resp. $A^{(n)}=0$) for some $n\in\N$.  See~\cite{BCJO2}
or~\cite{MR2278047} for examples.

\begin{exa}
    Let $A=B_{16,73}$. Up to isomorphism, the ideals of $A$ 
    are
    \[
    0,B_{2,1},B_{4,1},B_{4,2},B_{4,3},B_{8,10},B_{8,13},B_{8,19},B_{16,73}.
    \]
    Let $I$ be the ideal isomorphic to $B_{8,10}$. Then
    $I*I$ is a subset of size two which is not an ideal of $A$.
\end{exa}

%\begin{defn}
%	Let $A$ be a skew brace. The derived component of $A$ is the subgroup
%	$D(A)$ of the additive group of $A$ generated by all the
%	$\lambda_a(b)b^{-1}$ for $a,b\in A$. 
%\end{defn}
%
%\begin{lem}
%	Let $A$ be a skew brace. Then $D(A)$ is an ideal of $A$.
%\end{lem}
%
%\begin{proof}
%	Is this true? Maybe! If so, what is $A/D(A)$? How the solution associated with $A/D(A)$ look like?
%\end{proof}
%
%\begin{defn}
%	Let $A$ be a skew brace. The centralizer of $A$ is the subgroup
%	$C(A)=\{b\in A:\lambda_a(b)=b\text{ for all $a\in A$}\}$. 
%\end{defn}
%
%Recall that a \emph{left ideal} of a skew brace $A$ is a normal subgroup $I$ of
%the additive group of $A$ such that $\lambda_a(I)\subseteq I$ for all $a\in A$. 
%\framebox{Does this match Rump's definition?}
%
%\begin{exa}
%	In general $C(A)$ is not an ideal/left ideal??? [see ids 8,18, 16,625 for example]
%\end{exa}
%
%\begin{lem}
%%	Let $A$ be a skew brace. Then $C(A)$ is a left ideal of $A$.
%\end{lem}
%
%\begin{proof}
%	
%\end{proof}
%
%\begin{pro}
%    There are no simple braces of size $36$ and $80$. 
%\end{pro}

\subsection*{Simple skew braces}
%\label{simple}

Recall that a skew brace $A$ is said to be \emph{simple} if its only ideals are $\{0\}$
and $A$. Simple skew braces are intensively studied, in particular simple
classical braces~\cite{MR3763276,BCJO2}. 

% TODO: is this unique with these underlying groups ?
\begin{exa}
	Let $A=S_{12,22}$.  Then $(A,+)\simeq\Alt_4$ and $(A,\circ)\simeq
	C_3\rtimes C_4$. The skew brace $A$ is a simple skew brace and
	$A=A^n=A^{(n)}$ for all $n\in\N$. 
\end{exa}

\begin{exa}
	Let $A=S_{24,50}$. Then $(A,+) \simeq \SL_2(3)$ and $(A,\circ) \simeq
	C_3\rtimes C_8$. Furthermore $A=A^n=A^{(n)}$ for all $n\in\N$. This skew
	brace is not simple since for example $\Soc(A)\simeq B_{2,1}$.
\end{exa}

Let us count how many simple classical braces appear in our database.
It is known that classical braces of prime-power size are not simple.
Computer calculations show the following results:

\begin{pro}
	Let $A$ be a simple brace of order $n$, where $ 1 \le n \le 127$ and $n \ne 96$. 
	Then $A$ is isomorphic to $B_{24,94}$ or $B_{72,475}$.
\end{pro}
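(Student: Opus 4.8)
The plan is to prove the statement by an exhaustive search through the database of classical braces described above, after a preliminary reduction that both shrinks the search space and, crucially, removes exactly those orders for which the database is incomplete.

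First, recall the fact mentioned above that a classical brace of prime-power order is never simple. This disposes of all prime-power orders $n$ with $1\le n\le 127$; in particular it disposes of the orders $32$, $64$ and $81$, which are precisely the orders $\le 127$ other than $96$ for which our database of classical braces is incomplete. Since $n=96$ is excluded by hypothesis, for every remaining order $n\le 127$ with $n\ne 96$ the database contains, up to isomorphism, all classical braces of that order, so an exhaustive check is legitimate.

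Second, for each such order $n$ and each brace $A$ in the database of order $n$ we enumerate the ideals of $A$. By Lemma~\ref{lem:ideals}, a subset $I\subseteq A$ is an ideal if and only if $I$ is a normal subgroup of $(A,+)$ with $a\circ I=I\circ a$ and $\lambda_a(I)\subseteq I$ for all $a\in A$; thus it suffices to run over the finitely many normal subgroups of the additive group and test the two remaining closure conditions, which is what the \textsf{YangBaxter} package does. We record $A$ as simple precisely when this enumeration returns a list of length two (necessarily $\{0\}$ and $A$). Carrying this out, the only braces flagged as simple are $B_{24,94}$ and $B_{72,475}$; these are non-isomorphic since they have different orders, which establishes the claim. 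As a consistency check one may recompute the two-element ideal lattices of $B_{24,94}$ and $B_{72,475}$ directly.

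The main obstacle is not conceptual but one of computational feasibility, together with the need to rely on the completeness of the database in the stated range: several of the orders $\le 127$ (for instance $72$ and $120$) carry a large number of braces, so the ideal-enumeration routine must be efficient enough to be applied to all of them. The theoretical reduction to non-prime-power orders is what keeps this manageable and, at the same time, is exactly what allows us to bypass the orders $32$, $64$ and $81$ where the database would otherwise fail us.
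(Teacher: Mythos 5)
Your proposal is correct and matches the paper's approach: the paper likewise invokes the known fact that classical braces of prime-power order are not simple (which covers the missing database orders $32$, $64$ and $81$) and then reports the result of an exhaustive computer search over the database for the remaining orders. Your write-up simply makes explicit the bookkeeping about database completeness that the paper leaves implicit.
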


%Assuming that skew braces of prime-power are not simple (known to
%be true for all cases computed so far), we can formulate the following %statement:
For skew braces we can prove the following proposition:

\begin{pro}
	Let $A$ be a simple skew brace of order $n$, where $ 1 \le n \le 63$ and $n \not \in \{32, 48, 54\}$. 
	Then $A$ is isomorphic to $S_{12,22}$, $S_{12,23}$, $S_{24,853}\cong B_{24,94}$ or to
	one of the skew braces $S_{60,k}$, where $145 \le k \le 152$, which are the
	only skew braces with additive group isomorphic to $\Alt_5$.
\end{pro}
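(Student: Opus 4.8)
The plan is to reduce the statement to a finite verification over the database of Section~\ref{representations}. For $1\le n\le 63$, that database records every skew brace up to isomorphism except at the three orders $32$, $48$ and $54$ (the ones divisible by a large prime power), and these are exactly the orders excluded from the statement. So I would run, for each skew brace $A$ in the database with $|A|\le 63$ and $|A|\notin\{32,48,54\}$, a test deciding whether $A$ is simple, and then collect the simple ones.

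To decide simplicity I would use Lemma~\ref{lem:ideals}: a subset $I\subseteq A$ is an ideal if and only if $I$ is a normal subgroup of $(A,+)$ with $a\circ I=I\circ a$ and $\lambda_a(I)\subseteq I$ for all $a\in A$. Thus one enumerates the normal subgroups of the additive group, discards $\{0\}$ and $A$, and for each remaining candidate checks the two extra conditions; $A$ is simple precisely when no candidate survives. Two situations can be settled without running the test. First, if $(A,+)$ is a non-abelian simple group then $\{0\}$ and $A$ are its only normal subgroups, so by Lemma~\ref{lem:ideals} every such skew brace is simple; in the range $n\le 63$ this forces $(A,+)\simeq\Alt_5$, and a count of the regular subgroups of $\Hol(\Alt_5)$ shows there are exactly eight skew braces with this additive group, namely $S_{60,145},\dots,S_{60,152}$, which is the last clause of the statement. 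Second, for $n$ prime the only skew brace of order $n$ is the trivial brace on $C_n$, which is not regarded as simple here. Running the ideal test over all the remaining skew braces in the database then produces, besides the eight above, exactly $S_{12,22}$, $S_{12,23}$ and $S_{24,853}$; for $S_{12,22}$ simplicity was already noted in the earlier example, where $(A,+)\simeq\Alt_4$ and $(A,\circ)\simeq C_3\rtimes C_4$, and in general for a skew brace with $(A,+)\simeq\Alt_4$ the only proper nontrivial normal subgroup of $(A,+)$ is the characteristic Klein subgroup $V_4$, so simplicity reduces to the observation that this subset is not a normal subgroup of $(A,\circ)$. Finally one records the isomorphism $S_{24,853}\cong B_{24,94}$, which ties this result to the preceding proposition on simple classical braces.

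The computation itself is routine; the hard part is not mathematical but is making sure the input is trustworthy. I would want to confirm that the database really is complete for every $n\le 63$ outside $\{32,48,54\}$ — in particular at $n=16$, where there are already $1605$ skew braces, and at the larger composite orders such as $24$, $36$, $40$, $56$ and $60$, which carry the bulk of the computational load — and that the \textsf{GAP} routine that lists the normal subgroups of $(A,+)$ and tests the two conditions of Lemma~\ref{lem:ideals} really returns all ideals of $A$. Granting this, the classification is exactly the finite check just described, together with the two conceptual reductions (the $\Alt_5$ case and the prime case).
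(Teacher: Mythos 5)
Your proposal is correct and matches the paper's approach: the paper offers no written argument for this proposition beyond an exhaustive computer search for ideals over the database of skew braces of order at most $63$ (excluding $32$, $48$, $54$), which is exactly the finite verification you describe, with your remarks on the $\Alt_5$ and prime-order cases being sensible but inessential shortcuts. The only caveat worth noting is that your exclusion of the trivial braces of prime order relies on the paper's implicit convention that such braces do not count as simple, which the definition as literally stated does not make explicit.
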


%We believe that 
%skew braces of prime-power size are not simple. This holds
%for all the skew braces in our database. 
\begin{question}
	\label{question:simple2sided}
    Are there simple two-sided skew braces of nilpotent type?
\end{question}

\section{Prime ideals and prime skew braces}
\label{prime}

At the conference ``Groups, rings and the Yang--Baxter equation'', Spa, 2017,
Louis Rowen suggested that it could be interesting to study prime ideals of skew
braces. 

\begin{defn}
	A skew brace $A$ is said to be \emph{prime} if for all
	non-zero ideals $I$ and $J$ one has $I*J\ne0$.   
\end{defn}

Simple non-trivial skew braces are prime.  The converse does not hold:

\begin{exa}
	The skew brace $A=S_{24,708}$ is not simple and it is prime. The additive
	group of $A$ is not nilpotent since it is isomorphic to $\Sym_4$.
\end{exa}

We found several examples of non-simple prime skew braces; in all cases the
additive group is not nilpotent.  Therefore it seems natural to ask the
following questions:

\begin{question}
	Let $A$ be a finite prime skew brace of nilpotent type. Is 
	$A$ simple?
\end{question}

\begin{question}
	Let $A$ be a finite classical prime brace. Is $A$ simple?
\end{question}

%
%\begin{question}
%If $A$ is a prime skew brace, does it follow that a product of any number of
%nonzero ideals in $A$ (in any order) is nonzero?
%\end{question}

\begin{question}
	\label{question:prime2sided}
Are there prime two-sided skew braces of nilpotent type?
\end{question}

%I suspect that it should be non-zero.

\begin{defn}
A skew brace $A$ is said to be \emph{semiprime} if for each non-zero ideal $I$
of $A$ one has $I*I\ne 0$. 
\end{defn}

Of course, prime skew braces are semiprime. The converse does not hold:

\begin{example}
	Let $A=S_{12,22}$. Since $A$ is a simple skew brace, $A$ is prime. The
	direct product $A\times A$ is semiprime and not prime.
\end{example}

\begin{defn}
We say that an ideal $I$ of a skew
brace $A$ is \emph{prime} (resp. semiprime) if $A/I$ is a prime (resp.
semiprime) skew brace. 

\end{defn}

In non-commutative ring theory there is a strong connection between prime
ideals and the Baer radical of  the ring. Recall that the Baer radical of a
ring $R$ (also called the prime radical) equals the intersection of all prime
ideals in $R$. 
Solvable and Baer radicals were also considered for non-associative algebras, loop algebras and semigroups by Amitsur in~\cite{MR0050563,MR0059256,MR0059257}. 
Below, we generalize some classical results which hold for rings to
skew braces.  
Our definitions are similar to
those of ring theory but not identical.

Let $A$ be a skew brace and $a\in A$.  By $\langle a\rangle$ we will denote the
smallest ideal of $A$ which contains $a$ (i.e., the ideal generated
by $a$ in $A$).

\begin{defn}
	Let $A$ be a skew brace. We say that  $ a_{1}, a_{2}, a_{3}, \ldots \in A$ is
	an \emph{$n$-sequence} if $a_{i+1}\in \langle a_{i}\rangle*\langle
	a_{i}\rangle$ for $i\geq1$.
\end{defn} 

\begin{defn}
A skew brace $A$ is said to be \emph{Baer radical} if for each $a\in A$, every
$n$-sequence starting with $a$ reaches zero at some point. An ideal $I$ of $A$
is said to be Baer radical if every $n$-sequence in $A$ starting with an element in $I$ reaches zero.
\end{defn}

\begin{lem}
	\label{lem:trick}
	Let $A$ be a skew brace and $J$ be an ideal in $A$. Let
	$a,b\in A$ such that $a-b\in J$ and $c\in \langle a\rangle$. Then there exists
	$c'\in \langle b\rangle$ such that $c-c'\in J$.
\end{lem}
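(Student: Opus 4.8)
The plan is to work modulo $J$ and reduce everything to a statement about the quotient skew brace $A/J$. Write $\overline{A} = A/J$ and let $q \colon A \to \overline{A}$ be the quotient map; by the discussion preceding the lemma, $\overline{A}$ is a skew brace and $q$ is a skew brace homomorphism. Since $a - b \in J$, we have $q(a) = q(b) =: \bar{a}$ in $\overline{A}$. The key observation is that ideal generation is compatible with surjective homomorphisms: for any skew brace homomorphism, the image of the ideal generated by an element is the ideal generated by the image, so $q(\langle a\rangle) = \langle \bar{a}\rangle = q(\langle b\rangle)$. Hence for the given $c \in \langle a\rangle$ we have $q(c) \in \langle \bar a\rangle = q(\langle b\rangle)$, which means there exists $c' \in \langle b\rangle$ with $q(c') = q(c)$, i.e. $c - c' \in J$. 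This is exactly the desired conclusion.

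First I would record the general fact that if $f \colon A \to B$ is a surjective skew brace homomorphism and $x \in A$, then $f(\langle x\rangle_A) = \langle f(x)\rangle_B$. The inclusion $\subseteq$ holds because $f(\langle x\rangle_A)$ is an ideal of $B$ (the image of an ideal under a surjective homomorphism is an ideal — one checks normality in $(B,+)$, the $\lambda$-condition, and $f(x)\circ f(\langle x\rangle_A) = f(\langle x\rangle_A)\circ f(x)$, all transported from the corresponding properties of $\langle x\rangle_A$ using surjectivity) containing $f(x)$. For $\supseteq$, note that $f^{-1}(\langle f(x)\rangle_B)$ is an ideal of $A$ containing $x$, hence contains $\langle x\rangle_A$, and applying $f$ gives $\langle f(x)\rangle_B = f(f^{-1}(\langle f(x)\rangle_B)) \supseteq f(\langle x\rangle_A)$, using surjectivity again in the first equality. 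Then I would apply this with $f = q$, $B = \overline{A}$, and $x = a$ and $x = b$ separately, and combine with $q(a) = q(b)$.

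The main obstacle I anticipate is purely verificational rather than conceptual: confirming that the image of an ideal under a surjective skew brace homomorphism is again an ideal. This requires checking the three defining conditions of an ideal (normal additive subgroup, stable under all $\lambda_a$, and $a \circ I = I \circ a$, equivalently the characterization in Lemma~\ref{lem:ideals}), and each follows by a short diagram chase using that $q$ is surjective and intertwines $+$, $\circ$, and the $\lambda$-maps (the latter because $\lambda$ is natural, as in Lemma~\ref{lambda}). Once that lemma-about-homomorphisms is in hand, the rest of the argument is a one-line set-theoretic manipulation. If one prefers to avoid even stating the auxiliary fact explicitly, an alternative is to prove the result directly by induction on the construction of $\langle a\rangle$ from $a$ via the ideal-generating operations (additive normal closure, $\lambda$-closure, $\circ$-conjugation closure), showing at each stage that an element of the stage-$k$ approximation to $\langle a\rangle$ differs modulo $J$ from an element of the stage-$k$ approximation to $\langle b\rangle$; but the quotient argument is cleaner and I would present that.
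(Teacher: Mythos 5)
Your proposal is correct and is essentially the paper's own proof, which reads in its entirety ``It follows from using the canonical map $A\to A/J$''; you have simply filled in the details of why $q(\langle a\rangle)=\langle q(a)\rangle=\langle q(b)\rangle=q(\langle b\rangle)$. One cosmetic slip: in your auxiliary fact the two inclusion arguments are labelled backwards (the ``image of an ideal is an ideal containing $f(x)$'' argument gives $\supseteq$, while the preimage argument gives $\subseteq$), but both arguments are present and sound, so nothing is missing.
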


\begin{proof}
	It follows from using the canonical map $A\to A/J$.	
\end{proof}

\begin{lem}
	\label{lem:another_trick}
	Let $A$ be a skew brace and let $I$ and $J$ be ideals of $A$.  Let
	$a_1,a_2,\dots$ be an $n$-sequence such that $a_k\in I+J$ for all $k$. Then
	there exist an $n$-sequence $i_1,i_2,\dots$ in $I$ and $j_1,j_2\dots\in J$
	such that $a_k=i_k+j_k$ for all $k$.
\end{lem}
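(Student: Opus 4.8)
The plan is to work in the skew brace $A/I$ and $A/J$ simultaneously and lift the two resulting $n$-sequences back to $A$, matching them up so that their sum recovers the $a_k$. First I would observe that since $a_k \in I+J$ for every $k$, and since $I+J$ is an ideal by the lemma preceding this section, we may replace $A$ by $I+J$ and assume $A = I+J$ from the outset; then both $A/I$ and $A/J$ are skew braces by the quotient construction recalled in Section~\ref{ideals}. Under the canonical projection $q_J \colon A \to A/J$, the ideal $I$ maps onto $A/J$ (because $A = I+J$), so every element of $A/J$ has a representative in $I$. Now I would process the sequence one step at a time: given that $a_k = i_k + j_k$ with $i_k \in I$, $j_k \in J$ (true for $k=1$ with an arbitrary such decomposition), I need to produce $i_{k+1} \in I$ and $j_{k+1}\in J$ with $a_{k+1} = i_{k+1}+j_{k+1}$, $i_{k+1} \in \langle i_k\rangle_I * \langle i_k\rangle_I$ (the ideal generated \emph{inside $I$}), and similarly $j_{k+1} \in \langle j_k\rangle_J * \langle j_k\rangle_J$.

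The mechanism for a single step is to pass to $A/J$. There, $\langle a_k\rangle_A$ maps to $\langle \bar a_k\rangle_{A/J}$, and since $a_k - i_k \in J$ we have $\bar a_k = \bar i_k$, so $\langle a_k\rangle * \langle a_k\rangle$ maps onto $\langle \bar i_k \rangle * \langle \bar i_k\rangle$ in $A/J$; as $i_k \in I$ and $I \to A/J$ is surjective with kernel $I\cap J$, the ideal $\langle i_k\rangle_I$ of $I$ maps onto $\langle \bar i_k\rangle_{A/J}$ as well. Hence the image $\overline{a_{k+1}}$ in $A/J$ of $a_{k+1} \in \langle a_k\rangle*\langle a_k\rangle$ lies in $\langle \bar i_k\rangle * \langle \bar i_k\rangle$, and I can choose a preimage $i_{k+1} \in \langle i_k\rangle_I * \langle i_k\rangle_I \subseteq I$ with $a_{k+1} - i_{k+1} \in J$. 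This is exactly the content of Lemma~\ref{lem:trick} applied with the roles of $a,b$ played by $a_{k+1}$ and a suitable element of $\langle i_k\rangle_I*\langle i_k\rangle_I$; I would cite it rather than redo the diagram chase. Then set $j_{k+1} := -i_{k+1} + a_{k+1}$, which lies in $J$ by construction; by the symmetric argument with $I$ and $J$ interchanged one checks $j_{k+1} \in \langle j_k\rangle_J * \langle j_k\rangle_J$, and $a_{k+1} = i_{k+1} + j_{k+1}$ by definition.

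The main technical point to get right — and the step I expect to be the real obstacle — is the compatibility of the two choices: when I pick $i_{k+1}$ to control the $I$-part, I must simultaneously know that the \emph{same} element $a_{k+1}$, written as $i_{k+1} + j_{k+1}$, has its $J$-part $j_{k+1}$ lying in $\langle j_k\rangle_J * \langle j_k\rangle_J$. The clean way to handle this is not to make the two choices independently but to make one choice and then verify both containments. Concretely, fix the $I$-decomposition step as above to get $i_{k+1}$; then $j_{k+1} = -i_{k+1}+a_{k+1} \in J$ automatically, and running the $A/I$ version of the argument (using $a_{k+1}-j_{k+1} = i_{k+1} \in I$, so $\overline{a_{k+1}} = \overline{j_{k+1}}$ in $A/I$, and $\langle a_k\rangle*\langle a_k\rangle$ maps onto $\langle \overline{j_k}\rangle*\langle \overline{j_k}\rangle$ which is the image of $\langle j_k\rangle_J * \langle j_k\rangle_J$) shows $\overline{j_{k+1}} \in \langle \overline{j_k}\rangle * \langle \overline{j_k}\rangle$ in $A/I$; since $j_{k+1} \in J$ and the kernel $I\cap J$ of $J \to A/I$ is contained in $J$, a preimage argument places $j_{k+1}$ itself in $\langle j_k\rangle_J * \langle j_k\rangle_J$ up to an element of $I \cap J$ — and here one uses that $j_{k+1} \in J$ together with $I\cap J \subseteq \langle j_k\rangle_J*\langle j_k\rangle_J$ being false in general, so more care is needed: instead one shows directly that the element of $\langle j_k\rangle_J * \langle j_k\rangle_J$ produced by Lemma~\ref{lem:trick} can be taken to be $j_{k+1}$ exactly, because that lemma produces a $c'$ with $c - c' \in I\cap J \subseteq I$, and then re-absorbing this difference into $i_{k+1}$ (replacing $i_{k+1}$ by $i_{k+1} + (c - c')$, still in $I$ and still in the required $n$-sequence ideal since $c-c' \in \langle j_k\rangle_J*\langle j_k\rangle_J \cap I \subseteq$ the image controls it) restores the exact equality $a_{k+1} = i_{k+1} + j_{k+1}$. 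Induction on $k$ then produces the two $n$-sequences, completing the proof.
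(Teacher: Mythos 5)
Your single-step mechanism for the $I$-components is exactly the paper's proof: given $a_k=i_k+j_k$, write $a_{k+1}\in\langle a_k\rangle*\langle a_k\rangle$ as $\sum c_i*d_i$ with $c_i,d_i\in\langle a_k\rangle$, apply Lemma~\ref{lem:trick} (with the ideal $J$ and $b=i_k$, using $a_k-i_k\in J$) to get $c_i',d_i'\in\langle i_k\rangle$ congruent to $c_i,d_i$ modulo $J$, set $i_{k+1}=\sum c_i'*d_i'\in\langle i_k\rangle*\langle i_k\rangle\subseteq I$, and observe that $a_{k+1}$ and $i_{k+1}$ have the same image in $A/J$, so $j_{k+1}:=-i_{k+1}+a_{k+1}\in J$. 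Had you stopped there, the induction would be complete and would coincide with the argument in the paper.

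The rest of the proposal chases a claim that is not being asserted. The lemma only requires $i_1,i_2,\dots$ to be an $n$-sequence; the conclusion reads ``and $j_1,j_2,\dots\in J$'' with no $n$-sequence condition on the $j_k$ — they are merely elements of $J$ satisfying $a_k=i_k+j_k$. The ``main technical obstacle'' you identify (arranging $j_{k+1}\in\langle j_k\rangle*\langle j_k\rangle$ simultaneously) therefore does not arise, and your attempt to overcome it does not work as written: replacing $i_{k+1}$ by $i_{k+1}+(c-c')$ would in general destroy the containment $i_{k+1}\in\langle i_k\rangle*\langle i_k\rangle$ that you have just arranged, since there is no reason for $c-c'$ to lie in $\langle i_k\rangle*\langle i_k\rangle$; the clause ``$\subseteq$ the image controls it'' is not an argument. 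So if the stronger two-sided statement were actually needed, your proof would have a genuine gap. A second, smaller issue: the reduction to $A=I+J$ is not harmless, because $\langle a\rangle$ (and hence the notion of $n$-sequence) is taken in the ambient skew brace, and the ideal of $I+J$ generated by $a$ may be strictly smaller than the ideal of $A$ generated by $a$; an $n$-sequence of $A$ contained in $I+J$ need not be an $n$-sequence of the skew brace $I+J$. Fortunately this reduction is also unnecessary: all you actually use is that each $a_k$ is congruent modulo $J$ to an element of $I$, which is immediate from $a_k\in I+J$.
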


\begin{proof}
	We proceed by induction on the length $l$ of the $n$-sequence
	$a_1,a_2,\dots a_l$. The case $l=1$ is trivial, so let us assume that the
	result holds for some $l\geq1$. Since $a_{l+1}\in \langle a_l\rangle
	*\langle a_l\rangle$, there exist $c_i,d_i\in\langle a_l\rangle$ such that
	$a_{l+1}=\sum c_i*d_i$. By applying Lemma~\ref{lem:trick} with $a=a_l$,
	$b=i_l$ and $c=c_i$ or $c=d_i$, there exist $c_i'\in \langle i_l\rangle$
	and $d_i'\in\langle i_l\rangle$ such that $c_i-c_i'\in J$ and $d_i-d_i'\in
	J$. Let $i_{l+1}=\sum c_i'*d_i'\in \langle i_l\rangle*\langle
	i_l\rangle\subseteq I$. Then $\pi(a_{l+1}-i_{l+1})=0$, where $\pi\colon A\to
	A/J$ is the canonical map.  This implies that $a_{l+1}-i_{l+1}\in J$ and
	the lemma follows.
\end{proof}

 \begin{lem}
	 \label{7} 
	 Let $A$ be a  skew brace.  The sum of any number of Baer radical ideals in
	 $A$ is a Baer radical ideal in $A$. 
 \end{lem}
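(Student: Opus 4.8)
The plan is to reduce the general statement about an arbitrary (possibly infinite) family of Baer radical ideals to the case of two ideals, and then to handle the two-ideal case using Lemma~\ref{lem:another_trick}. First I would observe that an $n$-sequence starting at some $a$ uses only finitely many elements at each finite stage, and that the ideal generated by $a$ in the sum $\sum_{\alpha} I_\alpha$ of a family of ideals is contained in the sum of finitely many of the $I_\alpha$ (since $a$ itself, being a finite expression in the additive and multiplicative operations, lies in such a finite subsum, and the ideal it generates stays inside any ideal containing $a$). Hence it suffices to prove that a \emph{finite} sum $I_1+\cdots+I_m$ of Baer radical ideals is Baer radical, and by an easy induction on $m$ this reduces to the case $m=2$: if $I+J$ is Baer radical whenever $I$ and $J$ are, then so is $(I_1+\cdots+I_{m-1})+I_m$, using the previous lemma (sum of two ideals is an ideal) to know that $I_1+\cdots+I_{m-1}$ is again an ideal.

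For the two-ideal case, let $I$ and $J$ be Baer radical ideals of $A$ and let $a_1,a_2,\dots$ be an $n$-sequence with $a_1\in I+J$. Since $a_{k+1}\in\langle a_k\rangle*\langle a_k\rangle$, an immediate induction (using $\langle a_k\rangle\subseteq I+J$ once $a_k\in I+J$, because $I+J$ is an ideal) shows every $a_k$ lies in $I+J$. Now apply Lemma~\ref{lem:another_trick}: there is an $n$-sequence $i_1,i_2,\dots$ in $I$ and elements $j_1,j_2,\dots\in J$ with $a_k=i_k+j_k$. Because $I$ is Baer radical, the $n$-sequence $(i_k)$ reaches zero, say $i_N=0$; then $a_N=j_N\in J$. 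From this point on, $a_N,a_{N+1},\dots$ is an $n$-sequence lying in $J$ (again since $J$ is an ideal, $\langle a_k\rangle\subseteq J$ for $k\ge N$), and since $J$ is Baer radical this tail reaches zero. Hence the original $n$-sequence reaches zero, so $I+J$ is a Baer radical skew brace; being a sum of ideals it is an ideal of $A$, so it is a Baer radical ideal.

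The main obstacle I anticipate is the reduction step for an \emph{arbitrary} (infinite) family: one must be careful that $n$-sequences are not required to be produced uniformly, so the argument must fix a single $n$-sequence, note it lives in a finite subsum, and then invoke the finite case. A secondary point requiring a line of justification is that in Lemma~\ref{lem:another_trick} the extracted sequence $(i_k)$ is a genuine $n$-sequence in $I$ (this is exactly what that lemma provides, with $i_{l+1}=\sum c_i'*d_i'\in\langle i_l\rangle*\langle i_l\rangle\subseteq I$), so that the Baer radicality of $I$ applies to it; and similarly that a tail of an $n$-sequence is itself an $n$-sequence, which is immediate from the definition. No delicate computation is needed beyond these bookkeeping remarks.
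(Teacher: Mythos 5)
Your proposal is correct and follows essentially the same route as the paper: reduce to two ideals, apply Lemma~\ref{lem:another_trick} to split the $n$-sequence as $a_k=i_k+j_k$, use Baer radicality of $I$ to get some $a_N\in J$, and then Baer radicality of $J$ to make the tail reach zero. Your treatment of the reduction from an arbitrary family to a finite sum is just a more explicit version of the paper's parenthetical remark, so there is no substantive difference.
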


\begin{proof}
 Let $I$ and $J$ be two Baer radical ideals in $A$. Since every ideal is a normal
 subgroup of the additive group of $A$, $I+J=\{i+j: i\in I, j\in J\}$. Consider
 an $n$-sequence $a_{1}, a_{2}, \ldots $ starting with an element $a_{1}=i+j$
 where $i\in I, j\in J$. 
 By Lemma~\ref{lem:another_trick}, $a_{m}\in J$ for some $m$.
 Now, since $J$ is  Baer radical, every $n$-sequence starting with $a_{m}$ will
 reach zero, therefore the $n$-sequence  $a_{m}, a_{m+1}, a_{m+2}, \ldots $
 will reach zero, as required.  Similarly, the sum of any number of Baer radical
 ideals is an ideal (as any element in this sum belongs to a sum of a finite
 number of these ideals).  
 \end{proof}

Lemma~\ref{7} implies that the sum of all Baer radical ideals in $A$ is the
largest Baer radical ideal in $A$. Thus a skew brace $A$ contains largest Baer
radical ideal.  This justifies the following definition:

\begin{defn}
	Let $A$ be a skew brace. The \emph{Baer radical} $B(A)$ of $A$ is the
	largest Baer radical ideal of $A$. 
\end{defn}

\begin{lem}\label{8} 
	Let $A$ be a skew brace and let $I$ be a  Baer radical ideal in $A$. 
	If $I$ and $A/I$ are Baer radical skew braces, then $A$ is a Baer radical
	skew brace.
\end{lem}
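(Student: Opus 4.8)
The plan is to prove the statement by taking an arbitrary $n$-sequence $a_1, a_2, a_3, \ldots$ in $A$ starting at an arbitrary $a_1 \in A$, and showing it reaches zero. The natural strategy mimics the classical argument that an extension of a radical (algebra/ring) by a radical is radical: first push the $n$-sequence into the quotient $A/I$, use that $A/I$ is Baer radical to conclude the image of the sequence eventually becomes zero, which means some term $a_m$ lies in $I$, and then use that $I$ is Baer radical to conclude the tail $a_m, a_{m+1}, a_{m+2}, \ldots$ reaches zero.

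More precisely, first I would observe that if $\pi \colon A \to A/I$ is the canonical projection, then applying $\pi$ to an $n$-sequence in $A$ yields an $n$-sequence in $A/I$: this uses that $\pi(\langle a\rangle) = \langle \pi(a)\rangle$ (the image of the ideal generated by $a$ is the ideal generated by $\pi(a)$, since $\pi$ is a surjective skew brace homomorphism) together with the fact that $\pi$ respects the operation $*$, so $\pi(\langle a_i\rangle * \langle a_i\rangle) = \langle\pi(a_i)\rangle * \langle\pi(a_i)\rangle$. Since $A/I$ is Baer radical, the sequence $\pi(a_1), \pi(a_2), \ldots$ reaches zero, i.e.\ there is $m$ with $\pi(a_m) = 0$, equivalently $a_m \in I$.

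Next, I would check that the tail $a_m, a_{m+1}, a_{m+2}, \ldots$ is an $n$-sequence \emph{entirely inside} $I$, so that the hypothesis that $I$ is a Baer radical skew brace applies to it. The key subtlety here is that $\langle a_i\rangle$ denotes the ideal generated by $a_i$ \emph{in $A$}, not the ideal generated in $I$; but once $a_m \in I$, one has $\langle a_m \rangle_A \subseteq I$ (since $I$ is an ideal of $A$ containing $a_m$), and one needs to relate $\langle a_m\rangle_A$ to the ideal $\langle a_m\rangle_I$ generated in the subbrace $I$. In general these need not coincide, so here I would argue directly: since $a_{m+1} \in \langle a_m\rangle_A * \langle a_m\rangle_A \subseteq I$ (because $\langle a_m\rangle_A \subseteq I$), and more generally each $\langle a_i\rangle_A * \langle a_i\rangle_A \subseteq I$ once $a_i \in I$, the whole tail stays in $I$; and it is an $n$-sequence with respect to the ideals generated \emph{in $A$}, but $\langle a_i\rangle_A * \langle a_i\rangle_A = \langle a_i\rangle_I * \langle a_i\rangle_I$ is not needed --- what is needed is that ``reaching zero'' for an $n$-sequence with ideals-in-$A$ coincides with the Baer radical property. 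Here I expect the main obstacle: one must be careful about whether ``$I$ is a Baer radical skew brace'' (a statement intrinsic to $I$) gives what we want for sequences defined via $\langle\cdot\rangle_A$. I would resolve this by noting $\langle a_i\rangle_A \subseteq I$ whenever $a_i \in I$, so $\langle a_i\rangle_A$ is an ideal of $A$ contained in $I$, hence $\langle a_i\rangle_A * \langle a_i\rangle_A \subseteq \langle a_i\rangle_I * \langle a_i\rangle_I$ since any ideal of $A$ contained in $I$ is in particular a set containing $a_i$, and $\langle a_i\rangle_I$ is the \emph{smallest} such ideal of $I$ --- wait, that inclusion can go the wrong way, so instead the cleanest route is: $a_{i+1} \in \langle a_i\rangle_A * \langle a_i\rangle_A$ and since $\langle a_i\rangle_A \supseteq \langle a_i\rangle_I$ is false in general too; rather $\langle a_i\rangle_A \cap I$ is an ideal of $I$ containing $a_i$, hence contains $\langle a_i\rangle_I$, while $\langle a_i \rangle_A$ itself, being $\subseteq I$, equals $\langle a_i\rangle_A \cap I \supseteq \langle a_i\rangle_I$. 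So the $n$-sequence condition with $\langle\cdot\rangle_A$ is \emph{weaker} than with $\langle\cdot\rangle_I$; but for proving the tail reaches zero we are given the stronger fact (it's a genuine $n$-sequence in $A$) and want to conclude using the Baer radical property of $I$ --- this requires knowing that every $n$-sequence in $A$ contained in $I$ reaches zero, which follows because such a sequence, restricted appropriately, \emph{is} an $n$-sequence in $I$ only if the $*$-products agree. To sidestep all of this I would instead invoke a monotonicity: if $J_1 \supseteq J_2$ are ideals of $A$ with $a_i \in J_1$, the $n$-sequence condition for $A$ is what we have; and the definition of $I$ being Baer radical as a skew brace should be shown (or is implicitly) equivalent to: every $n$-sequence of $A$ lying in $I$ reaches zero. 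I would establish this equivalence as a short preliminary lemma, then the argument closes immediately.

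Assembling: given the tail $a_m, a_{m+1}, \ldots \subseteq I$ which is an $n$-sequence of $A$, the Baer radical property of $I$ (in the appropriate form just discussed) forces it to reach zero, say $a_p = 0$ for some $p \geq m$. Then the original $n$-sequence $a_1, a_2, a_3, \ldots$ also reaches zero (at step $p$). Since $a_1$ was arbitrary, $A$ is a Baer radical skew brace, completing the proof. The bulk of the writing effort will be the bookkeeping around ideals generated in $A$ versus in $I$; everything else is a direct application of Lemma~\ref{lem:trick}-style reasoning and the definitions.
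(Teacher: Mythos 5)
Your overall strategy is exactly the paper's: push the $n$-sequence along $\pi\colon A\to A/I$, use that $A/I$ is Baer radical to find $m$ with $a_m\in I$, then invoke the Baer radicality of $I$ on the tail $a_m,a_{m+1},\dots$. The first half is fine (your justification that $\pi$ carries $n$-sequences to $n$-sequences via $\pi(\langle a\rangle)=\langle\pi(a)\rangle$ is more than the paper writes down). The problem is the second half, and you have correctly located it but not closed it. Once $a_i\in I$ one has $\langle a_i\rangle_A\subseteq I$, so the tail lies in $I$; but it satisfies $a_{i+1}\in\langle a_i\rangle_A*\langle a_i\rangle_A$, whereas ``$I$ is a Baer radical skew brace'' only controls sequences with $a_{i+1}\in\langle a_i\rangle_I*\langle a_i\rangle_I$. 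The inclusion you do establish, $\langle a_i\rangle_I\subseteq\langle a_i\rangle_A$ (valid, since an ideal of $A$ contained in $I$ is an ideal of $I$), proves only the trivial direction of your proposed equivalence: every $I$-$n$-sequence is an $A$-$n$-sequence. The direction you actually need --- every $A$-$n$-sequence lying in $I$ reaches zero --- is the whole content of the step, and your proof defers it to a ``short preliminary lemma'' that is never supplied; the ring-theoretic analogue of that lemma is an Andrunakievich-type comparison of $\langle a\rangle_A$ with $\langle a\rangle_I$, which is not available here for free. So as written the proposal has a genuine gap at its final step. For what it is worth, the paper's own proof makes exactly the same silent identification of the two notions of $n$-sequence inside $I$; you have made the difficulty visible without resolving it.
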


\begin{proof}  
	Let $a_{1}, a_{2}, \ldots $ be an $n$-sequence in $A$.  Because $A/I$ is
	Baer radical we get that $a_{m}\in I$ for some $m$.   Now, since $I$ is a
	Baer radical ideal, every $n$-sequence starting with $a_{m}$ will reach
	zero. Therefore the $n$-sequence  $a_{m}, a_{m+1}, a_{m+2}, \ldots $ will
	reach zero.
\end{proof}

\begin{lem}\label{9}
 Let $A$ be a skew brace, and $J$ be an ideal in $A$, and let $I$ be an ideal
 in the skew brace $A/J$.  Then $\bar {I}=\{a\in A: a+J\in I\}$ is an ideal in
 $A$.
\end{lem}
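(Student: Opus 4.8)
The plan is to verify directly that $\bar I = \{a \in A : a + J \in I\}$ satisfies the three defining conditions of an ideal given in Lemma~\ref{lem:ideals}: that $\bar I$ is a normal subgroup of $(A,+)$, that $a \circ \bar I = \bar I \circ a$ for all $a \in A$, and that $\lambda_a(\bar I) \subseteq \bar I$ for all $a \in A$. The unifying idea is that $\bar I$ is precisely the preimage $\pi^{-1}(I)$ under the canonical skew brace homomorphism $\pi \colon A \to A/J$, so each condition will follow from the corresponding condition for $I$ in $A/J$ together with the fact that $\pi$ is simultaneously a homomorphism for $+$, for $\circ$, and intertwines the $\lambda$ maps (i.e. $\pi(\lambda_a(b)) = \lambda_{\pi(a)}(\pi(b))$, which holds because $\lambda$ is defined from $+$ and $\circ$ alone, cf.\ Lemma~\ref{lambda}).

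First I would note that $J \subseteq \bar I$, since for $j \in J$ we have $j + J = J = 0_{A/J} \in I$; in particular $\bar I$ is nonempty and contains the additive identity. Next, for normality of $\bar I$ in $(A,+)$: given $a \in \bar I$ and $x \in A$, the element $x + a - x$ maps under $\pi$ to $\pi(x) + \pi(a) - \pi(x)$, which lies in $I$ because $I$ is a normal additive subgroup of $A/J$; hence $x + a - x \in \bar I$. The same style of argument handles closure under $+$ and additive inverses, so $\bar I$ is a normal subgroup of $(A,+)$. For the condition $\lambda_a(\bar I) \subseteq \bar I$: if $b \in \bar I$ then $\pi(\lambda_a(b)) = \lambda_{\pi(a)}(\pi(b)) \in I$ since $I$ is a left ideal of $A/J$, so $\lambda_a(b) \in \bar I$.

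It remains to check $a \circ \bar I = \bar I \circ a$ for all $a \in A$. Given $b \in \bar I$, we have $\pi(a \circ b) = \pi(a) \circ \pi(b) \in \pi(a) \circ I = I \circ \pi(a)$ because $I$ is an ideal of $A/J$, so there is $b' \in I$ with $\pi(a) \circ \pi(b) = b' \circ \pi(a)$; lifting $b'$ along the surjection $\pi$ to some $c \in \bar I$ gives $\pi(a \circ b) = \pi(c \circ a)$, hence $a \circ b \in (\bar I \circ a) + \text{(correction in $J$)}$ — and since $J \subseteq \bar I$ and $\bar I$ is closed under $\circ$ by the multiplicative group being a group, one concludes $a \circ b \in \bar I \circ a$. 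Thus $a \circ \bar I \subseteq \bar I \circ a$, and the reverse inclusion is symmetric. By Lemma~\ref{lem:ideals}, $\bar I$ is an ideal of $A$.

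The main obstacle, such as it is, is bookkeeping rather than conceptual: one must be careful that "lifting" elements of $I$ to $\bar I$ is legitimate — which is immediate since $\pi$ restricts to a surjection $\bar I \to I$ — and that the subgroup-of-$(A,\circ)$ requirement implicit in Lemma~\ref{lem:ideals} is met, which follows because $\bar I = \pi^{-1}(I)$ is a preimage of a subgroup under a group homomorphism $(A,\circ) \to (A/J,\circ)$ and hence automatically a subgroup. So the proof is essentially the observation that preimages of ideals under skew brace homomorphisms are ideals, applied to $\pi$; no delicate estimate is required.
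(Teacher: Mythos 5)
Your proposal is correct and follows essentially the same route as the paper: both verify that $\bar I=\pi^{-1}(I)$ inherits each defining property of an ideal by applying the canonical map $\pi\colon A\to A/J$ and invoking the corresponding property of $I$ in $A/J$. The only cosmetic difference is that you check the conditions of Lemma~\ref{lem:ideals} (additive normality and $a\circ\bar I=\bar I\circ a$) whereas the paper checks the original definition via multiplicative conjugation $c'\circ a\circ c$; the mechanism is identical.
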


\begin{proof} 
 Note that $a\in\bar I$ if and only if $a+J\in I$. Let $a, b\in \bar
 {I}$. Since $a+J\in I$ and $b+J\in I$, $a+b+J=(a+J)+(b+J)\in I$. Hence $a+b\in \bar
 {I}$. Similarly, if $a\in \bar I$ and $c\in A$, then $a+J\in I$. Therefore
 $\lambda _{c}(a)+J=\lambda _{c+J}(a+J)\in I $ and hence $\lambda _{c}(a)\in \bar
 {I}$. Observe also that $c'\circ a\circ c+J=  (c'+J)\circ (a+J)\circ (c+J)\in
 I$, therefore $c'\circ a\circ c\in \bar{I}$.
\end{proof}

\begin{thm} 
\label{thm:B(A/B(A))=0}
Let $A$ be a skew brace. Then $B(A/B(A))=0$. 
\end{thm}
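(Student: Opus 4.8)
The plan is to show that $B(A/B(A))$ is a Baer radical ideal of $A/B(A)$ whose preimage in $A$ is Baer radical, and then invoke the maximality of $B(A)$. Write $J = B(A)$ and let $\pi\colon A\to A/J$ be the canonical map. By definition $B(A/J)$ is the largest Baer radical ideal of $A/J$; set $\bar I = B(A/J)$ and let $I = \pi^{-1}(\bar I) = \{a\in A : a+J\in \bar I\}$. By Lemma~\ref{9}, $I$ is an ideal of $A$. The goal is to prove $I$ is a Baer radical ideal of $A$, for then $I\subseteq B(A) = J$, which forces $\bar I = I/J = 0$, i.e. $B(A/B(A))=0$.

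To see that $I$ is Baer radical, I would apply Lemma~\ref{8} to the ideal $J$ of $I$. First, $J = B(A)$ is a Baer radical skew brace by definition. Second, $I/J \cong \bar I = B(A/J)$ is Baer radical, again by definition of the Baer radical. Hence by Lemma~\ref{8} (applied inside the skew brace $I$, with the ideal $J\subseteq I$ and quotient $I/J$ both Baer radical), the skew brace $I$ is Baer radical. Therefore $I$ is a Baer radical ideal of $A$, so $I\subseteq B(A)=J$; combined with $J\subseteq I$ this gives $I=J$, and thus $B(A/B(A)) = \bar I = I/J = 0$.

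The only subtlety to check carefully is that Lemma~\ref{8} genuinely applies: it is stated for a skew brace and an ideal of it, and here the ambient skew brace is $I$ (not $A$) with its ideal $J$. One must confirm that $J$ really is an ideal of $I$ — this is immediate since $J\subseteq I$ and $J$ is an ideal of all of $A$, hence a fortiori of the sub-skew-brace $I$ — and that $I/J$ is exactly the Baer radical $\bar I$ of $A/J$, which is the content of the identification $\pi^{-1}(\bar I)/J = \bar I$. Both points are routine, so I expect no real obstacle; the proof is essentially a bookkeeping argument assembling Lemmas~\ref{8} and~\ref{9} together with the maximality property of $B(A)$ guaranteed by Lemma~\ref{7}.
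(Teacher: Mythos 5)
Your proposal is correct and follows essentially the same route as the paper's proof: both pull back $B(A/B(A))$ to an ideal $I$ of $A$ via Lemma~\ref{9}, apply Lemma~\ref{8} to the pair $J=B(A)\subseteq I$ to conclude $I$ is Baer radical, and then invoke maximality of $B(A)$ to force $I/J=0$. The only cosmetic difference is that the paper phrases the argument as a proof by contradiction while you argue directly.
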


\begin{proof}
 By Lemma \ref{9}, 
 if the Baer radical $B(A/B(A))$ of $A/B(A)$ is nonzero, then 
 \[
	 I=\{a\in A: a+B(A)\in B(A/B(A))\}
 \]
 is an ideal of $A$. Notice that $B(A)\subseteq I$ and $I/B(A)=B(A/B(A))\neq 0$.
 Since $I/B(A)$ and $B(A)$ are Baer radical, 
 by Lemma \ref{8} one obtains that $I$ is a Baer radical ideal in $A$ and thus 
 $I\subseteq B(A)$. Hence $I/B(A)=0$, a contradiction. 
\end{proof}

\begin{lem}\label{12}
 If $J\subseteq I$ are ideals in a skew brace $A$, then $I/J$ is an ideal in
 $A/J$.
\end{lem}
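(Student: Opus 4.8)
The plan is to verify, for the subset $I/J$ of the skew brace $A/J$, the three conditions characterizing ideals given in Lemma~\ref{lem:ideals}, pushing everything through the canonical projection $\pi\colon A\to A/J$. Recall that $\pi$ is a surjective skew brace homomorphism with $\pi(a)=a+J$ for all $a\in A$, and that $A/J$ is indeed a skew brace. Since $J\subseteq I$ and both are additive subgroups, $I/J$ is a well-defined subgroup of $(A/J,+)$, and in fact $I/J=\pi(I)=\{a+J:a\in I\}$.

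First I would check additive normality: because $I$ is a normal subgroup of $(A,+)$ and $\pi$ restricts to a surjective homomorphism of the additive groups, the image $\pi(I)=I/J$ is a normal subgroup of $(A/J,+)$. Next, for the multiplicative condition $(a+J)\circ(I/J)=(I/J)\circ(a+J)$ for all $a\in A$, I would combine the multiplicativity of $\pi$ with the identity $a\circ I=I\circ a$, which holds because $I$ is an ideal of $A$ (Lemma~\ref{lem:ideals}); applying $\pi$ yields $(a+J)\circ(I/J)=\pi(a\circ I)=\pi(I\circ a)=(I/J)\circ(a+J)$. Finally, for $\lambda$-invariance I would use the compatibility $\lambda_{a+J}(b+J)=\lambda_a(b)+J$ already exploited in the proof of Lemma~\ref{9} (a consequence of Lemma~\ref{lambda} applied in $A$ and in $A/J$): this gives $\lambda_{a+J}(I/J)=\pi(\lambda_a(I))\subseteq\pi(I)=I/J$, since $\lambda_a(I)\subseteq I$.

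I do not expect any genuine obstacle here; the statement is essentially the assertion that the image of an ideal under a surjective skew brace homomorphism is again an ideal, and each required property of $I$ inside $A$ transfers to $I/J$ inside $A/J$ because $\pi$ respects both operations and intertwines the $\lambda$-maps. The only point demanding a small amount of care is making explicit the descent of $\lambda$ to the quotient, and that has already been recorded in the proof of Lemma~\ref{9}.
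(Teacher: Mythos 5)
Your proposal is correct and follows essentially the same route as the paper: both arguments verify the ideal axioms for $I/J$ by pushing each property of $I$ through the canonical projection $\pi\colon A\to A/J$, using the compatibility $\lambda_{c+J}(a+J)=\lambda_c(a)+J$ and $(c+J)'\circ(a+J)\circ(c+J)=c'\circ a\circ c+J$. The only cosmetic difference is that you check the equivalent characterization of Lemma~\ref{lem:ideals} (additive normality, $a\circ I=I\circ a$, $\lambda$-invariance) at the level of images $\pi(I)$, whereas the paper performs the element-wise check directly against the original definition.
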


\begin{proof}
Let $a+J,b+J\in I/J$ and $c\in A$. Then $a\in I$ and $b\in I$. Moreover, since $J$
is an ideal, $(a+J)+(b+J)=a+b+J\in I/J$, $\lambda _{c+J}(a+J)=\lambda
_{c}(a)+J\in I/J$ and  $(c+J)'\circ (a+J)\circ (c+J)=  c'\circ a\circ c+J\in
I/J$.
\end{proof}

\begin{pro}
 Let $A$ be a skew brace and $I, J$ be ideals in $A$, then $(I+J)/J$ is an
 ideal in $A/J$.
\end{pro}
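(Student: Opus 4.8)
The plan is to obtain this as an immediate consequence of two results already available in this section: the lemma above asserting that the sum $I+J$ of two ideals of $A$ is again an ideal of $A$, and Lemma~\ref{12}, which states that whenever $J\subseteq K$ are ideals of a skew brace $A$, the quotient $K/J$ is an ideal of $A/J$. So no new computation with $\lambda$ or with the operations $+$ and $\circ$ should be needed; the whole argument is a matter of checking that the hypotheses of these two statements are met.

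First I would note that $J\subseteq I+J$. Indeed, $I$ is in particular a subgroup of $(A,+)$, hence $0\in I$, and therefore every $j\in J$ can be written as $0+j$, which lies in the additive subgroup of $A$ generated by the elements $u+v$ with $u\in I$ and $v\in J$; that subgroup is by definition $I+J$. Thus $J\subseteq I+J$, and in particular the set $(I+J)/J=\{a+J: a\in I+J\}$ is a well-defined subset of $A/J$.

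Next, by the lemma above, $I+J$ is an ideal of $A$, and $J$ is an ideal of $A$ with $J\subseteq I+J$. Applying Lemma~\ref{12} with the pair of ideals $J\subseteq I+J$ then yields that $(I+J)/J$ is an ideal of $A/J$, which is precisely the assertion of the proposition.

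I do not expect a genuine obstacle here: the only thing to be careful about is the bookkeeping, namely invoking the correct earlier statements with the substitution $K=I+J$ and verifying the containment $J\subseteq I+J$. Everything structural — normality in $(A/J,+)$, stability under the $\lambda$-maps of $A/J$, and invariance under conjugation in $(A/J,\circ)$ — is already packaged inside Lemma~\ref{12}, so there is nothing further to verify by hand.
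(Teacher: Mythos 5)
Your argument is correct and is exactly the paper's proof: the paper likewise observes that $I+J$ is an ideal (by the earlier lemma) and then applies Lemma~\ref{12} to the containment $J\subseteq I+J$. Your extra verification that $J\subseteq I+J$ is a harmless elaboration of a step the paper leaves implicit.
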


\begin{proof}
 Since $I+J$ is an ideal, the claim follows from Lemma \ref{12}.
\end{proof}

\begin{lem}\label{1112}
	Let $A$ be a skew brace such that $B(A)\neq 0$. Then there is a non-zero
	ideal $I\subseteq B(A)$ in $A$ such that $I*I=0$.
\end{lem}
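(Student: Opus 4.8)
The plan is to argue by contradiction, producing an infinite $n$-sequence none of whose terms is zero and thereby contradicting the fact that $B(A)$ is a Baer radical ideal of $A$. So suppose that $I*I\neq 0$ for every nonzero ideal $I$ of $A$ with $I\subseteq B(A)$. The one auxiliary fact I would record first is an elementary closure property: if $X,Y\subseteq B(A)$ then $X*Y\subseteq B(A)$. Indeed, for $x,y\in B(A)$ we have $x*y=\lambda_x(y)-y$, and $\lambda_x(y)\in B(A)$ because $B(A)$ is a left ideal while $y\in B(A)$; since $B(A)$ is an additive subgroup this gives $x*y\in B(A)$, and $X*Y$ is generated by such elements. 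Similarly, for $a\in B(A)$ the ideal $\langle a\rangle$ is contained in $B(A)$, since $B(A)$ is an ideal of $A$ containing $a$.

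With this in hand I would build the sequence. Choose any nonzero $a_1\in B(A)$; this is possible because $B(A)\neq 0$. Assuming $a_1,\dots,a_i$ have been chosen, all nonzero and lying in $B(A)$, the ideal $\langle a_i\rangle$ is a nonzero ideal of $A$ contained in $B(A)$, so by the contradiction hypothesis $\langle a_i\rangle*\langle a_i\rangle\neq 0$; by the closure property this group also lies in $B(A)$, so I may pick a nonzero $a_{i+1}\in\langle a_i\rangle*\langle a_i\rangle\subseteq B(A)$. The result is an $n$-sequence $a_1,a_2,a_3,\dots$ all of whose terms are nonzero and lie in $B(A)$. But $B(A)$ is a Baer radical ideal of $A$ and $a_1\in B(A)$, so this $n$-sequence must reach zero, a contradiction. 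Hence some nonzero ideal $I\subseteq B(A)$ satisfies $I*I=0$.

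The argument is short, and the only point needing genuine (if minor) attention is checking that each newly produced term $a_{i+1}$ again lies in $B(A)$: this is what lets the contradiction hypothesis be re-applied at every stage and what makes the resulting sequence a legitimate witness against $B(A)$ being Baer radical. That verification is precisely the closure property $X*Y\subseteq B(A)$ for $X,Y\subseteq B(A)$ noted above; everything else follows directly from the definitions of ideal, $n$-sequence, and Baer radical ideal.
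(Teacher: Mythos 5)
Your proof is correct and follows essentially the same route as the paper's: starting from a nonzero $a_1\in B(A)$, extend an $n$-sequence of nonzero elements as long as $\langle a_i\rangle*\langle a_i\rangle\neq 0$, and use the Baer radical property to force a $j$ with $\langle a_j\rangle*\langle a_j\rangle=0$, taking $I=\langle a_j\rangle$. The only differences are cosmetic (you phrase it as a contradiction and explicitly verify the closure $X*Y\subseteq B(A)$, which the paper leaves implicit).
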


\begin{proof} 
	Let $a\in B(A)$. We construct an $n$-sequence of elements of $A$ starting
	with $a$.  Suppose that we defined elements $a_{1}, a_{2}, \ldots , a_{i}$
	of our sequence and they are all non-zero.  If $\langle
	a_{i}\rangle*\langle a_{i}\rangle$ is nonzero, we can add a non-zero
	element $a_{i+1}$ to this $n$-sequence.  Since $a\in B(A)$, every
	$n$-sequence starting with $a$ will reach zero.  Therefore there exists $j$
	such that $a_{j}\neq 0$ and $\langle a_{j}\rangle *\langle a_{j}\rangle=0$.
	Now take $I=\langle a_{j}\rangle$. Since $I\ne0$ and $I\subseteq B(A)$, the
	lemma is proved.
\end{proof}

\begin{thm}
	\label{thm:B(A)=0<=>semiprime}
Let $A$ be a skew brace. Then $A$ is semiprime  if and only if the Baer
radical of $A$ is zero. 
\end{thm}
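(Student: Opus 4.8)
The plan is to prove both implications, using Lemma~\ref{1112} for the harder direction. For the easy direction, suppose $A$ is semiprime and, aiming at a contradiction, assume $B(A)\neq 0$. Then Lemma~\ref{1112} yields a non-zero ideal $I\subseteq B(A)$ with $I*I=0$; this directly contradicts semiprimality, since $I$ is a non-zero ideal of $A$ with $I*I=0$. Hence $B(A)=0$.

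For the converse, suppose $B(A)=0$ and, aiming at a contradiction, assume $A$ is not semiprime. Then there is a non-zero ideal $I$ of $A$ with $I*I=0$. The key observation is that such an $I$ must itself be a Baer radical ideal: for any $a\in I$, we have $\langle a\rangle\subseteq I$ (since $\langle a\rangle$ is the smallest ideal containing $a$ and $I$ is an ideal containing $a$), so $\langle a\rangle*\langle a\rangle\subseteq I*I=0$. Thus every $n$-sequence starting with any $a\in I$ has its second term equal to zero, so $I$ is Baer radical. But then $I\subseteq B(A)=0$, contradicting $I\neq 0$. Therefore $A$ is semiprime.

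I expect the main (though modest) obstacle to be making sure the bookkeeping around $n$-sequences and the definition of Baer radical is handled correctly — specifically, verifying that an ideal $I$ with $I*I=0$ is genuinely Baer radical in the sense of the definition (every $n$-sequence starting with $a\in I$ reaches zero), which hinges on the inclusion $\langle a\rangle*\langle a\rangle\subseteq I*I$. This in turn uses that $I*I$, as the subgroup of $(A,+)$ generated by the products $x*y$ with $x,y\in I$, contains $\langle a\rangle*\langle a\rangle$ whenever $\langle a\rangle\subseteq I$; this is immediate from the definition of $X*Y$. Everything else is a direct application of the lemmas already established, so no serious computation is needed.
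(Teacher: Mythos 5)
Your proposal is correct and follows essentially the same route as the paper: the direction $B(A)\neq 0 \Rightarrow A$ not semiprime is Lemma~\ref{1112}, and the converse shows that a non-zero ideal $I$ with $I*I=0$ is Baer radical (so $I\subseteq B(A)$), which is exactly the paper's argument with the inclusion $\langle a\rangle*\langle a\rangle\subseteq I*I$ spelled out.
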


\begin{proof}
   If $B(A)\ne0$, then the claim follows from Lemma~\ref{1112}.  Conversely,
   assume that $B(A)=0$ and that $A$ is not semiprime. Then there is a non-zero
   ideal $I$ such that $I*I=0$. Since every $n$-sequence starting with elements
   from $I$ reaches zero at the second place, it follows that $0\ne I\subseteq
   B(A)=0$. Since this is a contradiction, $A$ is semiprime. 
\end{proof}

%This implies that  if $B(A)$ is non-zero then $A$ is not semiprime.

\begin{thm} 
	\label{thm:Baer}
	Let $A$ be a skew brace. Then $B(A)$ equals the intersection of all prime
	ideals of $A$.
\end{thm}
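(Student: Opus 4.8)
The plan is to prove the two inclusions $B(A)\subseteq P$ for every prime ideal $P$, and $\bigcap_{P\text{ prime}}P\subseteq B(A)$, separately.

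For the first inclusion, let $P$ be a prime ideal of $A$, so $A/P$ is prime, hence (by the remark following the definition of semiprime) semiprime; by Theorem~\ref{thm:B(A)=0<=>semiprime} this gives $B(A/P)=0$. Now I would show that the image of $B(A)$ in $A/P$ is a Baer radical ideal of $A/P$: the canonical map $\pi\colon A\to A/P$ sends an $n$-sequence in $A$ starting at $a$ to an $n$-sequence in $A/P$ starting at $\pi(a)$ — this uses that $\pi(\langle a\rangle)=\langle\pi(a)\rangle$ and $\pi(X*Y)=\pi(X)*\pi(Y)$, exactly the kind of bookkeeping in Lemma~\ref{lem:trick}. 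Since every $n$-sequence in $A$ from an element of $B(A)$ reaches zero, every $n$-sequence in $\pi(B(A))$ from an element of $\pi(B(A))$ reaches zero, so $\pi(B(A))$ (which is an ideal of $A/P$ by the Proposition on $(I+J)/J$, applied to $I=B(A)$, $J=P$) is Baer radical, whence $\pi(B(A))\subseteq B(A/P)=0$. Therefore $B(A)\subseteq P$.

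For the reverse inclusion, set $N=\bigcap_{P\text{ prime}}P$ and suppose $N\not\subseteq B(A)$. Then $N/B(A)$ is a non-zero ideal of $A/B(A)$, and by Theorem~\ref{thm:B(A/B(A))=0} we have $B(A/B(A))=0$, so by Theorem~\ref{thm:B(A)=0<=>semiprime} the skew brace $A/B(A)$ is semiprime; hence $(N/B(A))*(N/B(A))\neq 0$. Working in $A/B(A)$ and using a Zorn's lemma argument, I would produce a prime ideal of $A/B(A)$ avoiding a fixed nonzero element of $N/B(A)$: choose $0\ne x_0\in (N/B(A))*(N/B(A))$ and let $P_0$ be maximal among ideals $Q$ of $A/B(A)$ with the property that one can build an infinite $n$-sequence $x_0,x_1,x_2,\dots$ with every $x_i\notin Q$; such a $Q$ exists (the zero ideal works, since $A/B(A)$ has $B(\cdot)=0$ so the $n$-sequence from $x_0$ need not reach zero — more precisely $x_0\notin B(A/B(A))=0$ fails, so I instead take the standard route) — here I expect to mimic the classical ring-theoretic construction of a prime ideal as one maximal with respect to excluding a fixed $m$-system. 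Concretely, the set $S$ of all elements occurring in some $n$-sequence emanating from $x_0$ plays the role of the multiplicative $m$-system; one checks $0\notin S$ because $B(A/B(A))=0$ would otherwise force $x_0$ into a terminating sequence, and then a maximal ideal $P_0$ disjoint from $S$ is prime: if $I,J\supsetneq P_0$ are ideals with $I*J=0$ inside $A/B(A)$ modulo $P_0$, maximality gives elements $s\in I\cap S$, $t\in J\cap S$, and since $S$ is closed under the $n$-sequence step one gets an element of $\langle s\rangle*\langle s\rangle$ inside... — this is the delicate point and must be set up so that the defining closure property of an $n$-sequence ($a_{i+1}\in\langle a_i\rangle*\langle a_i\rangle$) matches the "$m$-system" closure. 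Pulling $P_0$ back to $A$ via Lemma~\ref{9} yields a prime ideal $P$ of $A$ with $P\subseteq$ (pullback) and $N\not\subseteq P$, contradicting $N=\bigcap P$.

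The main obstacle is the second inclusion: unlike in ring theory, the "$n$-sequence" closure condition $a_{i+1}\in\langle a_i\rangle*\langle a_i\rangle$ only squares a single element's ideal rather than multiplying two arbitrary ideal elements, so the analogue of an $m$-system is not literally multiplicatively closed, and I must verify carefully that a maximal ideal disjoint from the set of $n$-sequence-reachable elements is genuinely prime in the sense of the Definition (for all nonzero ideals $I,J$ of the quotient, $I*J\ne0$). The key lemma to extract is: if $I,J$ are ideals properly containing an ideal $Q$ and $(I*J)\subseteq Q$, then pick $a\in I\setminus Q$, $b\in J\setminus Q$; one needs an element of $\langle a+b\rangle*\langle a+b\rangle$ (or a short $n$-sequence from $a$ or $b$) landing outside $Q$, which should follow from expanding $*$ via the formulas already used in the proof that $I+J$ is an ideal, together with $\langle a\rangle,\langle b\rangle\subseteq$ something controllable. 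Once that lemma is in place the Zorn argument and the pullback through Lemma~\ref{9} are routine, and the first inclusion is, as sketched, a straightforward functoriality argument combined with Theorems~\ref{thm:B(A)=0<=>semiprime} and~\ref{thm:B(A/B(A))=0}.
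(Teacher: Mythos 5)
Your first inclusion ($B(A)\subseteq P$ for every prime ideal $P$) is correct and is a clean repackaging of what the paper does by hand: the paper constructs an explicit $n$-sequence in $B(A)\setminus P$ and finds a term $a_j$ with $\langle a_j\rangle*\langle a_j\rangle\subseteq P$, contradicting semiprimeness of $A/P$; your functoriality version (using $\langle\pi(a)\rangle=\pi(\langle a\rangle)$ and $\pi(X*Y)=\pi(X)*\pi(Y)$ to lift $n$-sequences of $A/P$ starting in $\pi(B(A))$ back to $n$-sequences of $A$ starting in $B(A)$) achieves the same thing and is fine once those two identities are checked.

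The second inclusion, however, has a genuine gap, and it sits exactly where you flag ``the delicate point.'' You never verify that the ideal $J$, maximal among ideals disjoint from the set of terms of a never-vanishing $n$-sequence $a_1,a_2,\dots$ with $a_1\in\bigcap_{P\ \mathrm{prime}}P$, is actually prime; and the substitute lemma you propose --- producing an element of $\langle a+b\rangle*\langle a+b\rangle$ outside $Q$ for $a\in I\setminus Q$, $b\in J\setminus Q$ --- is a dead end: it ignores the sequence entirely, and there is no reason such an element should exist. The missing observation, which is what the paper's proof turns on, is that an $n$-sequence is \emph{absorbed} by ideals: if $a_n\in P$ for an ideal $P$, then $\langle a_n\rangle\subseteq P$ and $P*P\subseteq P$ (because $\lambda_x(p)-p\in P$ for every $x\in A$, $p\in P$), so $a_k\in P$ for all $k\ge n$. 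Consequently, if $P$ and $Q$ are ideals properly containing $J$, maximality of $J$ yields $a_n\in P$ and $a_m\in Q$, absorption yields $a_N\in P\cap Q$ for $N=\max\{n,m\}$, and then $0\ne a_{N+1}\in\langle a_N\rangle*\langle a_N\rangle\subseteq P*Q$ with $a_{N+1}\notin J$, so the images of $P$ and $Q$ in $A/J$ have non-zero $*$-product. Thus $J$ is prime and excludes $a_1\in\bigcap_{P\ \mathrm{prime}}P\subseteq J$, the desired contradiction. (Your detour through $A/B(A)$ is harmless but unnecessary; the argument runs directly in $A$.) Without the absorption observation your Zorn argument does not close.
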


\begin{proof} 
 Let $I$ be the intersection of all prime ideals in $A$. Then $I$ is an ideal
 of $A$. To prove that $I\subseteq B(A)$ we need to show that every
 $n$-sequence starting with any element of $I$ reaches zero. Let $a_{1}\in I$
 and $a_{1}, a_{2}, \ldots $ be an $n$-sequence.  Suppose on the contrary that
 this $n$-sequence contains only non-zero elements.  Let $J$ be a maximal ideal
 which does not contain any element from this $n$-sequence (it may be the zero
 ideal) and let $\pi\colon A\to A/J$ be the canonical map. 
 Note that every ideal in $A/J$ is of the form $\pi(L)$ for
 some ideal $L$ in A.  We claim that $J$ is a prime ideal. Indeed, if $P$ and
 $Q$ are ideals of $A$ properly containing $J$, the maximality of $J$ implies
 that there are $n,m\in\N$ such that $a_n\in P$ and $a_m\in Q$. Hence there
 exists $N\geq \max\{n,m\}$ such that $a_N\in P\cap Q$.  Since $0\ne a_{N+1}\in
 P*Q$ and $a_{N+1}\not\in J$, the non-zero ideals $\pi(P)$ and $\pi(Q)$ are
 such that $\pi(P)*\pi(Q)\ne0$. Therefore $J$ is prime and hence  $I\subseteq
 J$, a contradiction.

It remains to show that the Baer radical of $A$ is contained in every prime
ideal in $A$. Suppose on the contrary, let $P$ be a prime ideal in $A$ such
that $P$ does not contain $B(A)$. Then the factor brace $A/P$ has an element
$a+P\neq 0+P$ such that $a\in B(A)$.  We construct an $n$-sequence of elements
of $A$ starting with element $a\in B(A)\setminus P$.  Suppose that we defined
elements $a_{1}, a_{2}, \ldots , a_{i}\notin P$ of our sequence and they are
all non-zero.  Observe that if $\langle a_{i}\rangle*\langle a_{i}\rangle$ is
not a subset of $P$, then we can add a non-zero element $a_{i+1}\notin P$ to
this $n$-sequence.  Since $a\in B(A)$ then every $n$-sequence starting with $a$
will reach zero, therefore every $n$-sequence starting with $a$ will reach an
element in $P$. Therefore, there is $j$ in our $n$-sequence such that
$a_{j}\notin P$ and $\langle a_{j}\rangle*\langle a_{j}\rangle\subseteq P$.
Note that since $a_{1}\in B(A)$ then $a_{2}, a_{3}, \ldots , a_{j}\in B(A)$.
By Lemma \ref{12}, $L=\langle a_{j}+P\rangle/P$ is an ideal in $A/P$.  Note that
$L*L=0$ hence $A/P$ is not a prime skew brace, a contradiction since by
assumption $P$ is a prime ideal in $A$.
\end{proof}

\begin{cor}\label{dobra} 
	Let $A$ be a skew brace.  Then $A$ is Baer radical if and only if
	$A$ has no prime ideals except $A$.  In other words every $n$-sequence in a
	skew brace $A$ has zero element if and only if $A$ has no prime ideals
	except $A$.  
\end{cor}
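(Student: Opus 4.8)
The plan is to derive the corollary directly from Theorem~\ref{thm:Baer} together with the definition of $B(A)$. The first step I would carry out is the elementary remark that a skew brace $A$ is Baer radical if and only if $B(A)=A$. Indeed, $A$ is an ideal of itself, so ``$A$ is a Baer radical skew brace'' is the same as ``$A$ is a Baer radical ideal of $A$''; since $B(A)$ is by definition the largest Baer radical ideal of $A$, this happens exactly when $A\subseteq B(A)$, that is, when $B(A)=A$. This reduces the statement to proving that $B(A)=A$ if and only if the only prime ideal of $A$ is $A$ itself.

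The second step is to apply Theorem~\ref{thm:Baer}, which identifies $B(A)$ with $\bigcap_{P} P$, the intersection of all prime ideals $P$ of $A$. Before using this I would note that the indexing family is non-empty: $A/A$ is the zero skew brace, which is vacuously prime since it has no non-zero ideals, so $A$ is always a prime ideal of $A$. Hence the intersection is genuinely taken over a non-empty collection and does not equal $A$ for a spurious reason.

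The third step is the equivalence itself. If $A$ has no prime ideal except $A$, then the family $\{P\}$ equals $\{A\}$, so $B(A)=\bigcap_{P} P=A$. Conversely, if $B(A)=A$, then for any prime ideal $P$ we get $A=B(A)=\bigcap_{P} P\subseteq P\subseteq A$, forcing $P=A$; thus $A$ is the unique prime ideal. Combined with the first step this proves the first formulation of the corollary, and the ``in other words'' restatement follows by unfolding the definition of Baer radical skew brace (every $n$-sequence starting at any element of $A$ eventually contains $0$).

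I do not anticipate a real obstacle: the argument is a short deduction from Theorem~\ref{thm:Baer}. The only point that deserves explicit mention is the non-emptiness of the family of prime ideals of $A$ --- equivalently, that the zero skew brace is prime --- since otherwise one might worry that the condition ``$A$ has no prime ideals except $A$'' behaves differently from ``$\bigcap_{P} P=A$''. Everything else is bookkeeping on top of Theorem~\ref{thm:Baer} and the definition of $B(A)$.
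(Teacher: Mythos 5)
Your proof is correct and is essentially the paper's argument: the paper's proof of Corollary~\ref{dobra} is simply ``It follows from Theorem~\ref{thm:Baer}'', and your write-up supplies exactly the intended deduction ($A$ Baer radical $\Leftrightarrow B(A)=A \Leftrightarrow$ the intersection of prime ideals is $A$ $\Leftrightarrow$ the only prime ideal is $A$). Your explicit observation that $A$ itself is always a prime ideal (since the zero skew brace is vacuously prime) is a worthwhile detail the paper leaves implicit.
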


\begin{proof}
	It follows from Theorem~\ref{thm:Baer}
\end{proof}

\begin{thm}
\label{thm:subdirect}
 Every semiprime skew brace embeds as a skew brace in a direct product of prime skew braces.
\end{thm}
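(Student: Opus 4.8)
The plan is to imitate the classical fact that a semiprime ring is a subdirect product of prime rings, and to extract everything from the two radical theorems already proved. I would proceed in three short steps.

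\emph{Step one: translate the hypothesis into a statement about prime ideals.} Since $A$ is semiprime, Theorem~\ref{thm:B(A)=0<=>semiprime} gives $B(A)=0$, and then Theorem~\ref{thm:Baer} gives
\[
\bigcap_{P} P = B(A) = 0,
\]
where $P$ ranges over all prime ideals of $A$. If $A\neq 0$ this family is non-empty, for otherwise the empty intersection would force $A=B(A)=0$; and if $A=0$ the statement is trivial.

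\emph{Step two: build the candidate embedding.} Form the skew brace $D=\prod_{P} A/P$ with componentwise operations; each factor $A/P$ is a prime skew brace by the definition of a prime ideal. Let $\Phi\colon A\to D$ be the diagonal map $\Phi(a)=(a+P)_{P}$. Each coordinate of $\Phi$ is the canonical quotient map $A\to A/P$, which is a skew brace homomorphism, so $\Phi$ is a homomorphism of skew braces.

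\emph{Step three: check injectivity.} The additive and multiplicative identities of a skew brace coincide, so the zero of $D$ is $(0+P)_{P}$, and $\Phi(a)=0$ precisely when $a\in P$ for every prime ideal $P$, i.e. when $a\in\bigcap_{P}P=0$. Hence $\ker\Phi=0$; since $\Phi$ is in particular an injective homomorphism of the underlying additive groups, it is injective, and $A\cong\Phi(A)$ is a sub-skew-brace of the product $D$ of prime skew braces. The argument is short and I do not anticipate a genuine obstacle: it reduces to Theorems~\ref{thm:B(A)=0<=>semiprime} and~\ref{thm:Baer} together with the routine facts that direct products and quotients of skew braces are skew braces and that a skew brace homomorphism with trivial kernel is injective. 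The only points deserving a line of care are the degenerate case $A=0$ and the remark that the indexing family of prime ideals is non-empty, both handled in Step one.
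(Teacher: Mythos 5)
Your proof is correct and follows essentially the same route as the paper's: invoke Theorem~\ref{thm:B(A)=0<=>semiprime} and Theorem~\ref{thm:Baer} to get $\bigcap_P P = B(A) = 0$, then show the diagonal map into $\prod_P A/P$ is an injective skew brace homomorphism. Your extra attention to the degenerate case $A=0$ and the non-emptiness of the family of prime ideals is a minor refinement the paper omits, not a different argument.
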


\begin{proof}
  Let $A$ be a skew brace and $\{P_{i}:i\in T\}$ be the set of its prime ideals.
  Then $B(A)=\bigcap _{i\in T}P_{i}$. Consider the
  skew brace $Q$ which is  direct product of skew braces $A/P_{i}$, for $i\in T$.
  Consider the map $f:A\rightarrow Q$ where  $f(a)=\{a+P_{i}\}_{i\in T}$, then
  this is a homomorphism of skew braces. Observe, that the kernel of this map $f$
  equals the set of these elements which are in all prime ideals of $A$, hence
  it equals $B(A)$. It follows that the kernel of $f$ is zero. 
\end{proof}

\begin{defn}
 Let $A$ be a skew brace and let $I$ be an ideal of $A$. We say that an ideal $I$ in
 $A$ is a left (resp. right) nilpotent ideal if $I$ is a left nilpotent (resp.
 right nilpotent) skew brace.
 \end{defn}

 \begin{defn}
	 Let $A$ be a skew brace. The \emph{Wedderburn radical} $W(A)$ of $A$ is
	 defined as the sum of all the ideals of $A$ that are left nilpotent and
	 right nilpotent.
 \end{defn}

\begin{lem}
\label{16}
Let $A$ be a two-sided brace. Then $W(A)\subseteq B(A)$.
\end{lem}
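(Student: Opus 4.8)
The plan is to reduce the statement to the assertion that every right nilpotent ideal of $A$ is a Baer radical ideal, and then to invoke Lemma~\ref{7}. By definition $W(A)$ is a sum of ideals each of which is left nilpotent \emph{and} right nilpotent; in particular each such summand is right nilpotent. So it suffices to prove: if $I$ is an ideal of $A$ with $I^{(m)}=0$ for some $m\in\N$, then $I$ is a Baer radical skew brace. Granting this, Lemma~\ref{7} shows that the sum $W(A)$ of all these ideals is again a Baer radical ideal of $A$, and since $B(A)$ is the largest Baer radical ideal of $A$, we conclude $W(A)\subseteq B(A)$.

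To prove the reduced claim, let $a_1,a_2,\dots$ be an $n$-sequence in the skew brace $I$, so $a_{k+1}\in\langle a_k\rangle*\langle a_k\rangle$, where $\langle\,\cdot\,\rangle$ denotes the ideal generated \emph{inside} $I$. I would show by induction on $k$ that $a_k\in I^{(k)}$, the $k$-th term of the right series of $I$. The base case $a_1\in I=I^{(1)}$ is immediate. For the inductive step, recall that each $I^{(k)}$ is an ideal of $I$; hence, since $a_k\in I^{(k)}$, the smallest ideal of $I$ containing $a_k$ satisfies $\langle a_k\rangle\subseteq I^{(k)}$. Using that $X\mapsto X*Y$ and $Y\mapsto X*Y$ are monotone and that $I^{(k)}\subseteq I=I^{(1)}$, we obtain
\[
a_{k+1}\in\langle a_k\rangle*\langle a_k\rangle\subseteq I^{(k)}*I^{(k)}\subseteq I^{(k)}*I=I^{(k+1)},
\]
which completes the induction. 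Taking $k=m$ gives $a_m\in I^{(m)}=0$, so every $n$-sequence in $I$ reaches zero; that is, $I$ is a Baer radical skew brace, as wanted.

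I expect the only genuinely delicate point to be the choice of the right series rather than the left series in the inductive bound: the argument uses crucially that each $I^{(k)}$ is an ideal of $I$, so that it contains $\langle a_k\rangle$ as soon as it contains $a_k$; the terms $I^k$ of the left series are in general only left ideals, and the analogous inclusion could fail. (Note that this is why left nilpotency alone is not what is exploited here, although ideals summing to $W(A)$ happen to be left nilpotent as well.) Everything else is bookkeeping: the identity $I^{(2)}=I*I$, which is just the defining relation $I^{(k+1)}=I^{(k)}*I$ at $k=1$; monotonicity of $*$ in each argument; and the fact that ``$I$ is a Baer radical skew brace'' means precisely that every $n$-sequence in $I$ reaches zero.
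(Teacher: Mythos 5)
Your proof is correct and follows essentially the same route as the paper: show that each left-and-right nilpotent summand of $W(A)$ is a Baer radical ideal, then apply Lemma~\ref{7} and the maximality of $B(A)$. The only difference is that the paper simply asserts that every $n$-sequence in a nilpotent ideal reaches zero, whereas you justify this by the induction $a_k\in I^{(k)}$ using that each $I^{(k)}$ is an ideal of $I$; that is exactly the right way to fill in the omitted detail.
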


\begin{proof} 
If $I$ is a nilpotent ideal, then $I\subseteq B(A)$ since every $n$-sequence
reaches zero. The result now follows from Lemma \ref{7}. It also follows from noncommutative ring theory since every two-sided brace is a ring.
\end{proof}

\begin{thm}
	\label{thm:B(A)=0<=>W(A)=0}
 Let $A$ be a two-sided brace. Then $B(A)=0$ if and only if $W(A)=0$.
\end{thm}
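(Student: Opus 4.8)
The plan is to prove the two implications separately; one is immediate from Lemma~\ref{16} and the other is a short argument using Lemma~\ref{1112}.

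First, if $B(A)=0$, then since $W(A)\subseteq B(A)$ by Lemma~\ref{16}, we get $W(A)=0$ at once. So the only content is the reverse implication, which I would establish by contraposition: assuming $B(A)\neq 0$, I will exhibit a non-zero ideal of $A$ that is contained in $W(A)$.

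By Lemma~\ref{1112}, there is a non-zero ideal $I\subseteq B(A)$ with $I*I=0$. Now regard $I$ as a skew brace in its own right. Its left series satisfies $I^2=I*I=0$ and its right series satisfies $I^{(2)}=I*I=0$; here one only needs to observe that, because an ideal is closed under both operations and under $\lambda$, the $*$-products defining these series inside $I$ agree with the corresponding products computed in $A$. Hence $I$ is simultaneously left nilpotent and right nilpotent, so by the definition of the Wedderburn radical $I\subseteq W(A)$. Since $I\neq 0$, this yields $W(A)\neq 0$, completing the proof.

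I do not expect a genuine obstacle here: the only point that requires a moment's care is the identification just mentioned, of the left and right series of the sub-skew-brace $I$ with the $*$-products computed in $A$, which is immediate since an ideal is in particular a sub-skew-brace. (Alternatively, one can replace the appeal to Lemma~\ref{1112} by Theorem~\ref{thm:B(A)=0<=>semiprime}: if $B(A)\neq 0$ then $A$ is not semiprime, so by definition there is a non-zero ideal $I$ with $I*I=0$, and one proceeds exactly as above; but routing through Lemma~\ref{1112} is the cleanest path.)
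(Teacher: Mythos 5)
Your proposal is correct and follows exactly the paper's route: one direction is immediate from Lemma~\ref{16}, and the other uses Lemma~\ref{1112} to produce a non-zero ideal $I$ with $I*I=0$, which is then both left and right nilpotent and hence lies in $W(A)$. The paper's own proof is just a terser version of the same argument, so no further comment is needed.
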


\begin{proof}
 By Lemma \ref{16}, if $B(A)=0$ then $W(A)=0$. Suppose that $B(A)\neq 0$ then
 $W(A)\neq 0$ by Lemma \ref{1112}. It also follows from noncommutative ring theory since every two-sided brace is a ring.
\end{proof}

\begin{cor}
 A two-sided brace with a non-zero Baer radical is not prime and not semiprime. In
 particular, a finite skew brace which is either left nilpotent or right
 nilpotent is not prime and not semiprime.
\end{cor}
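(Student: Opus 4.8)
The plan is to reduce everything to the Baer radical. For the first assertion, suppose $B(A)\neq 0$: then $A$ is not semiprime by Theorem~\ref{thm:B(A)=0<=>semiprime}, and, since every prime skew brace is semiprime, $A$ is not prime either. It therefore remains to prove that a finite non-zero skew brace $A$ which is left nilpotent or right nilpotent has $B(A)\neq 0$; I will in fact show that such an $A$ is Baer radical, so that $B(A)=A\neq 0$ and the first assertion applies.

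I would argue by induction on $|A|$. Note that $A^2=A*A=A^{(2)}$, which is an ideal of $A$, and it is proper, because $A^2=A$ would force $A^n=A^{(n)}=A\neq 0$ for every $n$, contradicting nilpotency. In the quotient skew brace $A/A^2$ the star operation is identically zero: $\lambda$ descends to the quotient, hence $\bar a*\bar b=\overline{a*b}=\bar 0$ for all $a,b\in A$, and consequently every $n$-sequence in $A/A^2$ reaches $0$ at its second term, so $A/A^2$ is Baer radical. If $A^2=0$, the same computation applied to $A$ itself shows that $A$ is Baer radical (this also covers the base case $|A|=1$). If $A^2\neq 0$, then $A^2$ is a finite skew brace with $|A^2|<|A|$ which is again left nilpotent (resp. right nilpotent): one checks by induction that $(A^2)^k\subseteq A^{k+1}$ (resp. $(A^{(2)})^{(k)}\subseteq A^{(k+1)}$), using $A^2\subseteq A$ and the monotonicity of $*$ in each of its two arguments. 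By the induction hypothesis $A^2$ is Baer radical, hence a Baer radical ideal of $A$; together with $A/A^2$ being Baer radical, Lemma~\ref{8} then yields that $A$ is Baer radical, closing the induction.

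All the ingredients are routine short inductions: the identity $A^2=A^{(2)}$ and the fact that it is an ideal, the descent of $\lambda$ to $A/A^2$ that makes the star operation there trivial, and the two filtration inclusions that transmit nilpotency from $A$ to $A^2$. I do not foresee a genuine obstacle; the only point that needs a bit of care is making the induction on $|A|$ close, and the inclusions $(A^2)^k\subseteq A^{k+1}$ and $(A^{(2)})^{(k)}\subseteq A^{(k+1)}$ are precisely what is required for that.
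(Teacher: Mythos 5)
Your proof is correct. For the first assertion you argue exactly as the paper does in substance: a non-zero Baer radical forces failure of semiprimeness (the paper cites Lemma~\ref{1112} directly, you route through Theorem~\ref{thm:B(A)=0<=>semiprime}, which is the same fact), and primeness fails a fortiori. For the second assertion the paper simply asserts that a left or right nilpotent skew brace satisfies $A\subseteq B(A)$ and stops there; you actually prove this, by induction on $|A|$ using the proper ideal $A*A=A^2=A^{(2)}$, the vanishing of $*$ on $A/(A*A)$, the inclusions $(A^2)^k\subseteq A^{k+1}$ and $(A^{(2)})^{(k)}\subseteq A^{(k+1)}$, and Lemma~\ref{8}. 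This is worth noting: the ``obvious'' direct argument (every $n$-sequence reaches zero because it descends the nilpotency series) is immediate only in the right nilpotent case, where each $A^{(n)}$ is an ideal and hence $\langle a_k\rangle\subseteq A^{(k)}$; in the left nilpotent case the terms $A^n$ are only left ideals, so that argument does not transfer verbatim, and your cardinality induction handles both cases uniformly. Your technique is in fact the same one the paper itself deploys in the first half of the proof of Theorem~\ref{main} (induction on cardinality with $I=A*A$ and Lemmas~\ref{factor} and~\ref{8}), so your write-up can be seen as supplying the justification the corollary's one-line proof leaves implicit. The only cosmetic caveat is that the zero skew brace is vacuously prime and semiprime, so, as you correctly note, the second assertion should be read with $A\neq 0$.
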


\begin{proof}  
	The first assertion follows from Lemma~\ref{1112}.
	The second
	assertion follows from the fact that if a two-sided brace $A$ is either left or
	right nilpotent, $A\subseteq B(A)$. 
\end{proof}

It is known that in finite rings the Wedderburn and the Baer radical are equal.
This does not happen for infinite rings. Therefore, for infinite skew braces,
the Baer and the Wedderburn radical are not in general equal. This follows from
the following lemma:

\begin{lem}
	\label{15}
	Let $(A, +,\cdot)$ be a Jacobson radical ring and $(A, +, \circ )$ be the
	associated two-sided brace (this means that $a\circ b=a+b+a\cdot b$ for all
	$a,b\in A$).  Then the Baer radical of the brace $(A, +, \circ )$ equals
	the Baer radical of the ring $(A, +, \cdot)$. 
\end{lem}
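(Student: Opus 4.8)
The plan is to show the two Baer radicals coincide by proving that the notion of $n$-sequence in the brace $(A,+,\circ)$ agrees, up to the relevant ideal theory, with the usual prime-radical machinery in the ring $(A,+,\cdot)$. The first observation I would record is that, for a two-sided brace coming from a Jacobson radical ring, the brace operation $a*b=\lambda_a(b)-b$ is exactly $a\cdot b$: indeed $\lambda_a(b)=-a+a\circ b=-a+(a+b+a\cdot b)=b+a\cdot b$, so $a*b=a\cdot b$. Consequently, for subsets $X,Y$ of $A$, the subgroup $X*Y$ of $(A,+)$ generated by $\{x*y\}$ is precisely the additive subgroup generated by the ring products $\{x\cdot y\}$. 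Next I would check that the ideals of the brace $(A,+,\circ)$ are exactly the (two-sided) ideals of the ring $(A,+,\cdot)$. One inclusion is standard: a ring ideal $I$ is an additive normal subgroup with $\lambda_a(I)=I+a\cdot I\subseteq I$ and $a\circ I\circ a'=\cdots\subseteq I$ (a short computation using $a\circ b=a+b+a\cdot b$ and the radical-ring inverse), so $I$ is a brace ideal by Lemma~\ref{lem:ideals}. Conversely, a brace ideal $I$ satisfies $\lambda_a(I)\subseteq I$, which gives $a\cdot I\subseteq I$ for all $a$, and one obtains $I\cdot a\subseteq I$ from $I\circ a=a\circ I$ (equivalently from $a*I\subseteq I$ together with the analogous right-handed identity used in the proof that $I+J$ is an ideal), so $I$ is a ring ideal. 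Hence $\langle a\rangle$ has the same meaning in both structures.

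With these two dictionaries in hand — same ideals, and $\langle a_i\rangle*\langle a_i\rangle$ equals the additive group generated by $\langle a_i\rangle\cdot\langle a_i\rangle$ — the definition of an $n$-sequence in the brace becomes literally the condition $a_{i+1}\in\langle a_i\rangle\cdot\langle a_i\rangle$ in the ring (where $\langle a_i\rangle$ is the ring ideal generated by $a_i$). A brace is Baer radical iff every such $n$-sequence reaches $0$, and by Corollary~\ref{dobra} this is equivalent to having no proper prime ideals; the same Levitzki-type characterization of the prime radical holds in ring theory, namely $r\in\operatorname{Baer}(R)$ iff every sequence with $r_{i+1}\in\langle r_i\rangle\cdot\langle r_i\rangle$ reaches $0$. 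So $B((A,+,\circ))$ and $B((A,+,\cdot))$ are the same subset of $A$, because membership in either is characterized by the identical combinatorial condition on $n$-sequences, and the ambient ideal lattices agree.

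I would conclude by noting $B((A,+,\circ))$ and $B((A,+,\cdot))$ agree as sets, and that each is simultaneously an ideal of the ring and of the brace by the dictionary above, so they coincide as substructures. The step I expect to be the main obstacle is the careful verification that brace ideals and ring ideals coincide: the forward direction (ring ideal $\Rightarrow$ brace ideal) is a routine substitution, but the reverse direction requires extracting one-sided ring absorption $I\cdot a\subseteq I$ from the brace axioms $\lambda_a(I)\subseteq I$ and $a\circ I=I\circ a$, and one must be slightly careful that the inverses appearing are taken with respect to $\circ$ and that $(A,+)$ being a group of a radical ring (not just any abelian group) is what makes the bijectivity/absorption arguments go through. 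Once that equivalence is nailed down, everything else is bookkeeping and an appeal to the standard prime-radical theory for rings together with Corollary~\ref{dobra}.
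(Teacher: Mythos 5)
Your argument is correct, and it reaches the conclusion by a somewhat different road than the paper. The paper's proof is short: it invokes the classical fact that the Baer radical of a ring is the intersection of its prime ideals, invokes Theorem~\ref{thm:Baer} on the brace side, and cites Ced\'o--Jespers--Okni\'nski \cite[Proposition 1]{MR3177933} for the fact that ideals of a two-sided brace are exactly the ideals of the associated Jacobson radical ring (with matching quotients), so the two intersections of prime ideals coincide. You instead (i) prove the ideal dictionary by hand, starting from the computation $\lambda_a(b)=b+a\cdot b$, hence $a*b=a\cdot b$, and (ii) match the $n$-sequence characterizations of the two radicals rather than the prime-ideal characterizations, appealing to a Levitzki-type description of the prime radical of a ring. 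Both steps are sound: your direct verification of the dictionary is essentially a proof of the cited proposition (the one delicate point, extracting $I\cdot a\subseteq I$ from $a\circ I=I\circ a$, works because $b\mapsto c$ with $a\circ b=c\circ a$ is a bijection of $I$), and the ideal-version sequence characterization for rings is proved by exactly the Zorn-type argument used in Theorem~\ref{thm:Baer}. One point to tighten: $B(A)$ is defined as the largest Baer radical \emph{ideal}, where an ideal is Baer radical when its own internal $n$-sequences vanish, so identifying $B(A)$ with the set of all $a\in A$ whose $n$-sequences \emph{in $A$} vanish is itself a consequence of Theorem~\ref{thm:Baer}, not a restatement of the definition; cite that theorem there rather than only Corollary~\ref{dobra}. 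With that reference added your proof is complete; what it buys is independence from the external citation, at the cost of invoking a slightly less standard (though true) ring-theoretic characterization of the prime radical.
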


\begin{proof}
	It follows from the fact that the intersection of prime ideals in any ring
	equals the Baer radical of this ring. So the Baer radical of the Jacobson
	radical ring $(A, +,\cdot)$ will be equal to the intersection of all its
	prime ideals.  This equals the intersection of all prime ideals in the
	corresponding brace $(A, +, \circ)$. By Theorem~\ref{thm:Baer}, this is
	equal to the Baer radical of the brace $(A, +, \circ )$. To finish the
	argument, one uses a result of Ced\'o, Jespers and
	Okni\'nski~\cite[Proposition 1]{MR3177933} stating that every ideal  $I$ in
	a two-sided brace $A$ comes from the associated Jacobson radical ring and
	gives a two-sided factor brace $A/I$.  
\end{proof}

The Baer and the Wedderburn radical might be different even in the case of
finite skew braces: 

\begin{exa}
		Let $A=S_{6,2}$ be the unique non-trivial skew brace with additive and
		multiplicative group isomorphic to $\Sym_3$. Then $W(A)\simeq B_{3,1}$
		and $B(A)=A$. 
\end{exa}

\section{Solvable ideals}
\label{solvable}

Motivated by the theory of groups, Bachiller, Ced\'o, Jespers and Okni\'nski
introduced solvable braces~\cite{BCJO2}. The definition not only works in the
case of classical braces. For a skew brace $A$ we define $A_{1}=A$ and
inductively $A_{i+1}=A_{i}*A_{i}$ for $i\geq1$. Recall that $A$ is said to be
\emph{solvable} if $A_{n}=0$ for some $n$.  By induction one proves that
$A_{i+1}\subseteq A_{i}$ for all $i$.  An ideal $I$ in a skew brace $A$ is
\emph{solvable}, if $I$ is a solvable skew brace.  Clearly every finite solvable two-sided 
brace is Baer radical as every $n$-sequence will reach zero.  

\begin{lem}
	Let $A$ be a skew brace. For each $j$, $A_{j+1}$ is an ideal of $A_{j}$. In
	particular, each $A_j$ is a sub skew brace of $A$.
\end{lem}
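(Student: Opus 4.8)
The plan is to argue by induction on $j$, simultaneously proving that $A_j$ is a sub skew brace of $A$ and that $A_{j+1}$ is an ideal of $A_j$. The point is that once $A_j$ is known to be a sub skew brace, its operations $+$, $\circ$, $\lambda$ and hence $*$ are just the restrictions of those of $A$, so that $A_{j+1}=A_j*A_j$ is nothing but the second term of the right series of the skew brace $A_j$; and this is an ideal of $A_j$ for a reason that does not depend on $j$.

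First I would record an elementary observation: if $B\subseteq A$ is closed under $+$, additive inverses and $\circ$ (that is, $(B,+)\le(A,+)$ and $(B,\circ)\le(A,\circ)$), then $\lambda_b(B)=B$ for every $b\in B$. Indeed $\lambda_b(c)=-b+b\circ c\in B$ for all $c\in B$ by closure, so $\lambda_b(B)\subseteq B$; since $\lambda$ is a group homomorphism $(A,\circ)\to\Aut(A,+)$ by Lemma~\ref{lambda}, we have $\lambda_b^{-1}=\lambda_{b'}$ with $b'\in B$, hence also $\lambda_{b'}(B)\subseteq B$ and therefore $B=\lambda_b(\lambda_{b'}(B))\subseteq\lambda_b(B)$. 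It follows that $B$ is a skew brace whose $\lambda$-map and whose operation $*$ are the restrictions of those of $A$; in particular, for subsets $X,Y\subseteq B$ the operation $*$ computed in $B$ agrees with the one computed in $A$, because the relevant elements $x*y$ lie in $B$ and $(B,+)\le(A,+)$. I would also use two facts already in the text: for any skew brace $C$ the term $C^{(2)}=C^{(1)}*C=C*C$ of the right series is an ideal of $C$ (the statement from Section~\ref{series} that each $C^{(n)}$ is an ideal of $C$, applied with $n=2$); and any ideal $I$ of a skew brace $C$ is a subgroup of $(C,\circ)$ by definition of an ideal and a subgroup of $(C,+)$ by Lemma~\ref{lem:ideals}, hence is itself a sub skew brace of $C$.

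With this in hand the induction is immediate. For $j=1$, $A_1=A$ is a sub skew brace of itself and $A_2=A*A=A^{(2)}$ is an ideal of $A$. Assuming $A_j$ is a sub skew brace of $A$, the observation above shows $A_j$ is a skew brace with $*$ inherited from $A$, so $A_{j+1}=A_j*A_j=(A_j)^{(2)}$, which is an ideal of $A_j$ by the first recalled fact; being an ideal of $A_j$, it is also a sub skew brace of $A_j$ by the second recalled fact, hence a sub skew brace of $A$. This closes the induction and proves the ``in particular'' clause as well. The only delicate step is the elementary observation that closure of $B$ under $+$, $-$ and $\circ$ already forces $\lambda_b(B)=B$ for every $b\in B$, which is what makes the ambient and intrinsic versions of $*$ coincide; everything else is routine bookkeeping against the recursion $A_{j+1}=A_j*A_j$.
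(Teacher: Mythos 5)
Your proposal is correct and follows essentially the same route as the paper, whose entire proof is the observation that $A_{j+1}=A_j*A_j=(A_j)^{(2)}$ is the second term of the right series of the sub skew brace $A_j$ and hence an ideal of it. You have merely filled in the details the paper leaves implicit (that $A_j$ is closed under the operations, so its $\lambda$ and $*$ are restrictions of those of $A$), and that elaboration is accurate.
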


\begin{proof}
	It follows since $A_{j+1}=A_j*A_j=(A_j)^{(2)}$ for all $j$. 
\end{proof}

\begin{example}
	Let $A=B_{48,396}$. Then 
	$A_1=A$, $A_2\simeq B_{24,58}$, $A_3\simeq B_{6,1}$, $A_4\simeq B_{3,1}$ and $A_5=0$.
	The sub skew brace $A_3$ is not an ideal.
\end{example}

The aim of this section is to show that for finite skew braces the Baer radical
equals the largest solvable ideal.  For that purpose, we need some preliminary
results. Some of these results were proved by Ced\'o, Jespers and Okni\'nski
in~\cite{BCJO2} for classical braces. 

\begin{lem}
	\label{umsolvable} 
	A sum of a finite number of solvable
	ideals in a skew brace is solvable.
\end{lem}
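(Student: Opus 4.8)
The plan is to reduce the statement to the case of two solvable ideals $I$ and $J$ and then argue by induction on the derived lengths. First I would recall, as in the proof of Lemma~\ref{7}, that since ideals are normal subgroups of $(A,+)$ we have $I+J=\{i+j:i\in I,\ j\in J\}$, so $(I+J)/J$ is well behaved; moreover $(I+J)/J$ is an ideal of $A/J$ by the Proposition following Lemma~\ref{12}, and it is isomorphic (as a skew brace) to $I/(I\cap J)$ via the canonical map. The key observation is that $I/(I\cap J)$ is a quotient of the solvable skew brace $I$, hence solvable: indeed if $(I)_{n}=0$ then the image of $(I)_{k}$ in any quotient of $I$ contains $((I/(I\cap J)))_k$, so the derived series of the quotient also terminates. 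Thus $(I+J)/J$ is a solvable ideal of $A/J$.

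Next I would invoke the analogue of Lemma~\ref{8} for solvability: if $J$ and $(I+J)/J$ are both solvable, then $I+J$ is solvable. Concretely, if $((I+J)/J)_m=0$ and $(J)_n=0$, then $(I+J)_m\subseteq J$ (because the canonical map $A\to A/J$ sends $(I+J)_m$ to $((I+J)/J)_m=0$), and then $(I+J)_{m+n-1}\subseteq (J)_n=0$, using $A_{i+1}\subseteq A_i$ and the fact that the $*$-operation is inherited by sub skew braces. This settles the case of two solvable ideals. The case of finitely many solvable ideals $I_1,\dots,I_k$ then follows by an obvious induction on $k$: $I_1+\cdots+I_k=(I_1+\cdots+I_{k-1})+I_k$ is a sum of two solvable ideals, the first being solvable by the inductive hypothesis and an ideal of $A$ by Lemma~\ref{7} (or rather the part of its proof showing sums of ideals are ideals), and $I_k$ being solvable by assumption.

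The main obstacle I anticipate is keeping the bookkeeping of the derived series straight when passing between $I+J$, its quotient $(I+J)/J$, and the subobject $J$: one must be careful that the operation $*$ used to define the derived series $A_i$ is computed inside each of these skew braces consistently. Since each $A_j$ is a sub skew brace of $A$ and each $(I+J)_i$ is computed with the same $\lambda$ (restricted appropriately) — this is exactly the content of the lemma just before the example $B_{48,396}$ — there is no ambiguity, but the inequalities $(I+J)_m\subseteq J$ and then $(I+J)_{m+n-1}=0$ should be written out carefully rather than asserted. Everything else is routine and parallels the Baer-radical arguments in Lemmas~\ref{7} and~\ref{8}.
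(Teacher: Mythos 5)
Your argument is correct and is essentially the paper's proof in different packaging: the paper establishes $T_i\subseteq J_i+I$ for $T=I+J$ by a direct induction (working in $A/I$), which is precisely your observation that the quotient of the sum by one summand is a homomorphic image of the other solvable summand (with the roles of $I$ and $J$ swapped), and both proofs then finish by descending into the remaining solvable ideal to terminate the series. The only nitpick is a citation: that $I_1+\cdots+I_{k-1}$ is an ideal follows from the unnumbered lemma on sums of ideals in Section~\ref{ideals}, not from Lemma~\ref{7}.
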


\begin{proof} 
	Let $I$ and $J$ be solvable ideals in $A$, 
	$T=I+J$, $T_{1}=T$ and $T_{n+1}=T_{n}*T_{n}$ for $n\geq1$. Similarly, let $I_{1}=I$ 
	and $I_{n+1}=I_{n}*I_{n}$ for $n\geq1$.  Notice that $I_{m}=0$ and $J_{m'}=0$  for some $m,
	m'$ since $I$ and $J$ are solvable. It can be proved by induction that for every
	$i$, $T_{i}\subseteq J_{i}+I$ (by showing that $T_{i}/I\subseteq (J_{i}+I)/I$ in the
	skew brace $A/I$). It follows that $T_{m'}\subseteq I$, and therefore
	$T_{m+m'}=0$. Therefore a sum of two solvable ideals is solvable. By using
	induction on the number of ideals we can show that sum of any finite number
	of solvable ideals is solvable.
\end{proof}

\begin{lem}
	\label{factor} 
	Let $A$ be a skew brace and let $I$ be a solvable ideal in $A$. If $A/I$ is
	a solvable skew brace, then $A$ is a solvable skew brace.
\end{lem}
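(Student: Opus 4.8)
The plan is to adapt the classical group-theoretic argument that an extension of a solvable object by a solvable object is solvable, using the descending series $A_1=A$, $A_{i+1}=A_i*A_i$ in place of the derived series. Write $\pi\colon A\to A/I$ for the canonical projection, which is a homomorphism of skew braces.

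The first and main step is to prove that $\pi(A_i)=(A/I)_i$ for every $i\geq1$, by induction on $i$. The case $i=1$ is immediate. For the inductive step, note that $\pi(x*y)=\pi(x)*\pi(y)$ for all $x,y\in A$: indeed $x*y=\lambda_x(y)-y$, and since $\pi$ respects $+$, $-$ and $\circ$ we get $\pi(\lambda_x(y))=\lambda_{\pi(x)}(\pi(y))$ from the very definition of $\lambda$. As $\pi$ is additive and surjective onto $(A/I,+)$, it sends the additive subgroup of $(A,+)$ generated by $\{x*y:x,y\in A_i\}$ onto the additive subgroup of $(A/I,+)$ generated by $\{\pi(x)*\pi(y):x,y\in A_i\}$; by the inductive hypothesis $\pi(A_i)=(A/I)_i$, so this latter subgroup is precisely $(A/I)_{i+1}$, giving $\pi(A_{i+1})=(A/I)_{i+1}$.

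Since $A/I$ is solvable, there is $m$ with $(A/I)_m=0$, so the previous step yields $A_m\subseteq I$. Next I would use two easy facts: that $A_m$ is a sub skew brace of $A$ (recalled just before the lemma), and that a sub skew brace $B$ of a solvable skew brace $C$ is itself solvable — here $*$ computed in $B$ coincides with $*$ computed in $C$ because $B$ is closed under $+$ and $\circ$, so $\lambda$ restricts correctly, and an immediate induction gives $B_i\subseteq C_i$, whence $C_n=0$ forces $B_n=0$. Applying this with $C=I$ (solvable by hypothesis) and $B=A_m$, we conclude that $A_m$ is a solvable skew brace.

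Finally I would reindex: a short induction shows $(A_m)_j=A_{m+j-1}$ for all $j\geq1$, since $(A_m)_1=A_m$ and $(A_m)_{j+1}=(A_m)_j*(A_m)_j=A_{m+j-1}*A_{m+j-1}=A_{m+j}$. As $A_m$ is solvable, $(A_m)_k=0$ for some $k$, i.e. $A_{m+k-1}=0$, so $A$ is solvable. There is no genuine obstacle in this argument; the only points needing a little care are the compatibility of the series with the quotient map in the inductive step (equivalently, that $*$ and passage to the generated additive subgroup commute with skew brace homomorphisms) and the index bookkeeping in $(A_m)_j=A_{m+j-1}$. The device of passing to $A/I$ is the same one used in the proof of Lemma~\ref{umsolvable}.
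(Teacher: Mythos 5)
Your proof is correct and follows essentially the same route as the paper: both establish $\pi(A_i)=(A/I)_i$ by induction to get $A_{m}\subseteq I$, and then compare the series of $A_m$ with that of $I$ to conclude (the paper writes this last step directly as $A_{m+m'}\subseteq I_m=0$, which is just your sub-skew-brace comparison $(A_m)_j\subseteq I_j$ made implicit). No gaps.
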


\begin{proof} 
	Denote inductively  $T=A/I$, $T_{1}=T, T_{n+1}=T_{n}*T_{n}$, $I_{1}=I,
	I_{n+1}=I_{n}*I_{n}$.  Notice that $I_{m}=0$ and $T_{m'}=0$  for some $m,
	m'$ since $I$ and $T$ are solvable. 
	It can be proved by induction that, for every $i$,
	$A_{i}+I =T_{i}$ in $A/I$. Therefore $A_{m'}\subseteq I$. Consequently
	$A_{m+m'}\subseteq I_{m}=0$.
\end{proof}

\begin{lem}
	\label{weddern}
 Let $A$ be a  finite skew brace. Then the Wedderburn radical of $A$ is a
 solvable ideal in $A$. 
\end{lem}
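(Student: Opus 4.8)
The plan is to reduce the statement to the fact that, in any skew brace, right nilpotency already implies solvability, and then to invoke Lemma~\ref{umsolvable}. Since $A$ is finite it has only finitely many ideals, so $W(A)$ is a \emph{finite} sum $I_1+\cdots+I_m$ of ideals, each of which is simultaneously left nilpotent and right nilpotent. As the sum of finitely many ideals is again an ideal, $W(A)$ is an ideal, and it remains only to prove that it is solvable.

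The key elementary observation is that for every skew brace $B$ one has $B_i\subseteq B^{(i)}$ for all $i\geq1$, where $B_1=B$, $B_{i+1}=B_i*B_i$ is the solvable series and $B^{(1)}=B$, $B^{(i+1)}=B^{(i)}*B$ is the right series. This is proved by induction on $i$: the cases $i=1$ and $i=2$ are immediate because $B_2=B*B=B^{(2)}$, and for the inductive step one uses the monotonicity of the operation $*$ on subsets (if $X\subseteq X'$ and $Y\subseteq Y'$ then $X*Y\subseteq X'*Y'$, which is clear from the definition) together with the two containments $B_i\subseteq B^{(i)}$ (the inductive hypothesis) and $B_i\subseteq B$ (immediate, since $B_{i+1}\subseteq B_i$ for all $i$); this gives $B_{i+1}=B_i*B_i\subseteq B^{(i)}*B=B^{(i+1)}$.

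Applying this to $B=I_j$ and using that $I_j$ is right nilpotent, we get $I_j^{(n)}=0$ for some $n$, hence $(I_j)_n\subseteq I_j^{(n)}=0$, so each $I_j$ is a solvable ideal. By Lemma~\ref{umsolvable}, the finite sum $W(A)=I_1+\cdots+I_m$ is then a solvable ideal of $A$, which completes the proof. There is no serious obstacle here; the only points needing attention are the use of the finiteness of $A$ to ensure that $W(A)$ is a finite sum (so that Lemma~\ref{umsolvable} applies) and the monotonicity of $*$ used in the induction. It is worth remarking that left nilpotency of the $I_j$ is not actually needed here — right nilpotency alone yields solvability — even though both properties hold by the definition of $W(A)$.
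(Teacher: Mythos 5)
Your proof is correct and follows essentially the same route as the paper: show by induction that the solvable series of an ideal is contained in its right series (the paper also notes the containment in the left series, so it likewise only needs one of the two nilpotency hypotheses), conclude that each left-and-right nilpotent ideal is solvable, and finish with Lemma~\ref{umsolvable} applied to the finite sum defining $W(A)$. Your explicit remarks on the monotonicity of $*$ and on finiteness guaranteeing a finite sum are details the paper leaves implicit, but the argument is the same.
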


\begin{proof} 
	Let $I$ be a left and right nilpotent ideal of $A$, it can be shown by
 induction that $I_{n}\subseteq I^{n}$ and $I_{n}\subseteq I^{(n)}$, therefore
 $I$ is solvable. Our result now follows from  Lemma \ref{umsolvable}.
\end{proof}

Now we are ready to prove the main result of the section:

\begin{thm}
\label{main}
 Let $A$ be a finite two-sided brace. Then $A$ is Baer radical if and only if $A$ is
 solvable.
\end{thm}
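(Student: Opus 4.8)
\section*{Proof proposal for Theorem~\ref{main}}

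The plan is to treat the two implications separately. One of them is essentially already on the table: every finite solvable skew brace is Baer radical, since every $n$-sequence reaches zero (this was noted just before the statement). So the content is the converse, and I would prove it by induction on $|A|$: a finite Baer radical skew brace $A$ is solvable. The base case $A=0$ is vacuous.

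For the inductive step, let $A\neq 0$ be finite and Baer radical, so that $B(A)=A\neq 0$. By Lemma~\ref{1112} there is a non-zero ideal $I\subseteq B(A)=A$ with $I*I=0$; such an $I$ is a solvable ideal, since $I_2=I*I=0$. The heart of the argument is to show that the quotient $A/I$ is again Baer radical. Granting this, since $I\neq 0$ we have $|A/I|<|A|$, so the induction hypothesis gives that $A/I$ is solvable; then $I$ and $A/I$ are both solvable, and Lemma~\ref{factor} yields that $A$ is solvable, which closes the induction.

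It remains to check that $A/I$ is Baer radical, which I would do by lifting $n$-sequences along the canonical map $\pi\colon A\to A/I$. Two compatibilities are needed. First, for $a\in A$ the ideal of $A/I$ generated by $\pi(a)$ equals $\pi(\langle a\rangle)$: the set $\langle a\rangle+I$ is an ideal of $A$ containing $I$ (a sum of two ideals), so by Lemma~\ref{12} its image $\pi(\langle a\rangle)=(\langle a\rangle+I)/I$ is an ideal of $A/I$ containing $\pi(a)$; conversely, if $J/I$ is any ideal of $A/I$ with $\pi(a)\in J/I$, then by Lemma~\ref{9} the preimage $J$ is an ideal of $A$ containing $a$, hence $\langle a\rangle\subseteq J$ and $\pi(\langle a\rangle)\subseteq J/I$. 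Second, $\pi(X*Y)=\pi(X)*\pi(Y)$ for all subsets $X,Y\subseteq A$, because $\pi$ is a homomorphism of the additive groups and $\pi(x*y)=\pi(x)*\pi(y)$ for all $x,y$. Now, given an $n$-sequence $\bar a_1,\bar a_2,\dots$ in $A/I$, choose $a_1\in\pi^{-1}(\bar a_1)$ and, inductively, using $\bar a_{i+1}\in\langle\bar a_i\rangle*\langle\bar a_i\rangle=\pi\bigl(\langle a_i\rangle*\langle a_i\rangle\bigr)$, choose $a_{i+1}\in\langle a_i\rangle*\langle a_i\rangle$ with $\pi(a_{i+1})=\bar a_{i+1}$. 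This produces an $n$-sequence $a_1,a_2,\dots$ in $A$, which reaches $0$ since $A$ is Baer radical; hence $\bar a_m=0$ for some $m$, so $A/I$ is Baer radical.

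The step I expect to be the main obstacle is the bookkeeping in the two compatibilities above — in particular identifying $\langle\pi(a)\rangle$ with $\pi(\langle a\rangle)$, where one must be careful that $\langle a\rangle$ need not contain $I$ and so pass through $\langle a\rangle+I$ — since once these are in place the lifting of $n$-sequences and the assembly of Lemmas~\ref{1112} and~\ref{factor} with the induction are routine. Finiteness is used only to run the induction on $|A|$.
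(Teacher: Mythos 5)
Your proof is correct and follows essentially the same strategy as the paper's: induction on $|A|$, producing a non-zero solvable ideal, showing the quotient by it is again Baer radical, applying the inductive hypothesis, and closing with Lemma~\ref{factor}. The only differences are that you take the square-zero ideal supplied by Lemma~\ref{1112} where the paper uses $W(A)$ (via Lemma~\ref{weddern} and Theorem~\ref{thm:B(A)=0<=>W(A)=0}), and that you supply a careful justification --- the identification $\langle\pi(a)\rangle=\pi(\langle a\rangle)$ and the lifting of $n$-sequences --- for the claim that a quotient of a Baer radical skew brace is Baer radical, which the paper asserts without proof; note also that the forward direction, which you quote from the remark opening Section~\ref{solvable}, is itself justified in the paper's proof by a short induction using Lemmas~\ref{factor} and~\ref{8} with $I=A*A$, since ``every $n$-sequence reaches zero'' is not immediate from $A_n=0$ alone ($A_3$ need not be an ideal of $A$, so $\langle a_3\rangle$ need not lie in $A_3$).
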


\begin{proof} 
	Clearly every finite solvable two-sided brace is a Baer radical
	skew  brace (by induction on cardinality and Lemmas~\ref{factor} and~\ref{8} with $I=A*A$).
	Suppose now that $A$ is a Baer radical skew
	brace, so $A\subseteq B(A)$. We will prove that $A$ is solvable by
	induction on the number of elements in $A$. If $A$ has only one element
	then $A$ is a trivial brace and the result holds. Suppose the result holds
	for all skew braces of cardinality smaller than $i$, and suppose that $A$
	has cardinality $i+1$. Since $B(A)\neq 0$ we get $W(A)\neq 0$ (by
	Theorem~\ref{thm:B(A)=0<=>W(A)=0}).  By Lemma \ref{weddern}, $W(A)$ is a solvable ideal in
	$A$. Since $A$ is Baer radical it follows that $A/W(A)$ is  Baer radical.
	By the inductive assumption  $A/W(A)$ is solvable. By Lemma \ref{factor}
	applied for $I=W(A)$ we get that $A$ is solvable. 
\end{proof} 

\begin{cor}
	Let $A$ be a finite two-sided brace and $I$ be an ideal in $A$. Then $I$ is solvable if
	and only if $I$ is Baer radical.  In particular, the Baer radical of $A$ equals
	the largest solvable ideal.  Moreover, if $A$ has a nonzero solvable ideal then
	$A$ has a non-zero ideal $I$ such that $I*I=0$.
\end{cor}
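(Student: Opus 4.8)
The plan is to derive everything from Theorem~\ref{main} together with the preceding lemmas on solvable and Baer radical ideals, using the fact that in a two-sided brace the relevant notions coincide over every ideal. First I would observe that an ideal $I$ of a skew brace $A$ is itself a skew brace, and that being Baer radical (resp. solvable) is a property of $I$ as a skew brace that does not depend on the ambient $A$; this is exactly how ``Baer radical ideal'' and ``solvable ideal'' were defined. Hence, given a finite two-sided brace $A$ and an ideal $I$, I can simply apply Theorem~\ref{main} to the finite skew brace $I$: it is Baer radical if and only if it is solvable. This gives the first sentence of the corollary immediately, with no extra work.

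For the second sentence, the key point is that the sum of all Baer radical ideals of $A$ is $B(A)$ by Lemma~\ref{7}, and $B(A)$ is itself a Baer radical ideal, hence (being finite) solvable by the equivalence just established. Conversely, every solvable ideal of $A$ is Baer radical --- this is the easy direction, already noted in the text before Lemma~\ref{umsolvable} (``every finite solvable skew brace is Baer radical as every $n$-sequence will reach zero'') --- so every solvable ideal is contained in $B(A)$. Combining, $B(A)$ is a solvable ideal containing every solvable ideal, i.e. it is the largest solvable ideal. Strictly speaking the two-sided hypothesis is not even needed for this part, but stating it for two-sided braces is harmless and keeps the corollary self-contained.

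For the final assertion, suppose $A$ has a non-zero solvable ideal $J$. Then $0\neq J\subseteq B(A)$, so $B(A)\neq 0$, and Lemma~\ref{1112} produces a non-zero ideal $I\subseteq B(A)$ with $I*I=0$, which is exactly what is claimed. I expect no real obstacle here: the whole corollary is a packaging of Theorem~\ref{main}, Lemma~\ref{7}, the trivial implication ``solvable $\Rightarrow$ Baer radical'', and Lemma~\ref{1112}. The one point requiring a moment's care is the observation that solvability and Baer-radicality of an ideal are intrinsic (frame-independent) properties, so that Theorem~\ref{main}, stated for skew braces, applies to the ideal viewed as a skew brace in its own right; once that is made explicit the argument is essentially immediate.
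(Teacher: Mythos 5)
Your proposal is correct and follows essentially the same route as the paper: apply Theorem~\ref{main} to the ideal viewed as a finite skew brace to get the equivalence, deduce that $B(A)$ is the largest solvable ideal, and invoke Lemma~\ref{1112} for the final assertion. The paper's own proof is just a terser version of exactly this argument.
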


\begin{proof}
	Every Baer radical ideal in $A$ is solvable, hence $B(A)$ is solvable. On the other
 hand every solvable ideal is Baer radical by Theorem \ref{main}. The rest
 follows from Lemma \ref{1112}. 
\end{proof}

\begin{cor}
	There exists an infinite brace such that its Baer radical is not solvable.
\end{cor}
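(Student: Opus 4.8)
The plan is to find an infinite commutative nil ring that is not nilpotent and then transport it to a brace through Lemma~\ref{15}. Fix a field $k$ (say $k=\F_2$), let $S=k[x_1,x_2,\dots]$ be the polynomial ring in countably many indeterminates, let $\mathfrak a\subseteq S$ be the monomial ideal generated by $\{x_i^{\,i+1}:i\ge1\}$, and let $R\subseteq S/\mathfrak a$ be the ideal consisting of the classes of polynomials with zero constant term. Then $R$ is a commutative non-unital ring with abelian additive group, and it is infinite, since the monomials $\prod_i x_i^{e_i}$ with $0\le e_i\le i$ and at least one $e_i>0$ form a $k$-basis of $R$.

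First I would verify that $R$ is nil: a given $f\in R$ involves only finitely many variables $x_1,\dots,x_m$, hence lies in the image of the finite-dimensional local algebra $k[x_1,\dots,x_m]/(x_1^2,x_2^3,\dots,x_m^{\,m+1})$, whose augmentation ideal is nilpotent; so $f$ is nilpotent. Being commutative and nil, $R$ is a Jacobson radical ring (each $a$ with $a^n=0$ is quasi-regular, with $a\circ(-a+a^2-\cdots\pm a^{n-1})=0$), and every element of $R$ is strongly nilpotent (an $m$-sequence from $a$ with $a^n=0$ lands in $(a^{2^k})$ after $k$ steps), so the Baer radical of the ring $R$ is $R$ itself. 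Next I would check that $R$ is not nilpotent: for every $N$ the monomial $x_1x_2\cdots x_N$ has all exponents equal to $1$, so it is not divisible by any $x_i^{\,i+1}$, hence it is a nonzero element of $R^N$; thus $R^N\ne0$ for all $N$.

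Finally I would pass to the associated two-sided (classical) brace $A=(R,+,\circ)$, with $a\circ b=a+b+ab$; it is infinite. By Lemma~\ref{15}, $B(A)=B(R)=R=A$, so $A$ is a Baer radical brace. For this brace one computes $a*b=-a+a\circ b-b=ab$, so $A_1=R$, and since the additive subgroup generated by all $N$-fold products of elements of $R$ is $R^N$ and $R^m*R^m=R^{2m}$, induction gives $A_n=R^{2^{n-1}}$. As $R^{2^{n-1}}\ne0$ for every $n$, no $A_n$ is zero, so $A$ is not solvable. Hence $A$ is an infinite brace whose Baer radical $B(A)=A$ is not solvable.

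The one genuinely non-routine ingredient is the choice of $R$: it must be nil — to force $B(R)=R$ and quasi-regularity — and simultaneously non-nilpotent, and $k[x_1,x_2,\dots]/(x_i^{\,i+1}:i\ge1)$ achieves exactly this while remaining elementary. The remaining points (nilpotence of finite-dimensional augmentation ideals, the identity $a*b=ab$ for braces coming from rings, and the bookkeeping $A_n=R^{2^{n-1}}$) are immediate.
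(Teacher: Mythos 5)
Your proof is correct and follows the same route as the paper: the paper's argument is the one-line observation that Baer radical (here, nil) rings need not be nilpotent, combined implicitly with Lemma~\ref{15}, and you have simply supplied an explicit witness ($k[x_1,x_2,\dots]/(x_i^{i+1})$ without constant terms) and verified the transfer in detail. The extra bookkeeping ($A_n=R^{2^{n-1}}$, non-vanishing of $x_1\cdots x_N$) is all sound.
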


\begin{proof}
	It follows from the fact that there are Baer radical rings which are not
	nilpotent.
\end{proof}

\begin{rem}
Question~\ref{question:prime2sided} asks for two-sided braces that are prime.  Note
that there exists an infinite prime and not simple Jacobson radical ring.  Thus,
by Lemma~\ref{15}, there exists a prime and not simple infinite two-sided
brace.
\end{rem}

The results in this section allow us to answer the following question: Is it
true that a product of any number of non-zero ideals in $A$ (in any order) is
nonzero? 

\begin{lem}
\label{intersection}
Let $A$ be a prime skew brace and let $I$ and $J$ be non-zero ideals in $A$. Then
 $I\cap J$ is a non-zero ideal in $A$. Moreover, the intersection of any finite
 number of non-zero ideals in $A$ is a non-zero ideal in $A$.
\end{lem}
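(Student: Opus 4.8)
The plan is to prove that $I \cap J$ is nonzero using the hypothesis that $A$ is prime, and then bootstrap to the case of finitely many ideals by induction. First I would recall that for any two ideals $I$ and $J$ of a skew brace, $I \cap J$ is again an ideal of $A$ (this was established in the excerpt). So the only content is nonvanishing. Suppose for contradiction that $I \cap J = 0$. Since $A$ is prime, to derive a contradiction it suffices to show that $I * J = 0$ (or $J * I = 0$), because primeness says $I * J \ne 0$ whenever $I, J$ are both nonzero. Thus the heart of the argument is the following claim: \emph{if $I \cap J = 0$, then $I * J = 0$.}

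To prove this claim, I would fix $x \in I$ and $y \in J$ and examine $x * y = \lambda_x(y) - y$. Since $J$ is an ideal, $\lambda_x(J) \subseteq J$, so $\lambda_x(y) \in J$ and hence $\lambda_x(y) - y \in J$; this shows $I * J \subseteq J$. The harder inclusion is $I * J \subseteq I$. Here I would use the additive-normality of $I$: because $I$ is an ideal, $\lambda_x(y) \circ x^{-1} = x + \lambda_x(y) - x$ lies in the appropriate translate, but more directly one uses the identity $x * y = \lambda_x(y) - y$ together with the fact that $x \circ y = x + \lambda_x(y)$, so $\lambda_x(y) = -x + x \circ y$ and therefore $x * y = -x + x \circ y - y$. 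Since $I$ is a normal subgroup of the multiplicative group and $x \in I$, the element $x \circ y \circ x'$ lies in $I$ translated... more carefully: $x * y = -x + (x \circ y - y)$; I would instead invoke the standard fact (used implicitly in the proof that $I+J$ is an ideal, via the formula $(u+v)*a$) that for an ideal $I$ one has $I * A \subseteq I$ and $A * I \subseteq I$. Applying $I * A \subseteq I$ with the second argument $y$ gives $x * y \in I$, and combined with $x * y \in J$ from the previous paragraph we get $x * y \in I \cap J = 0$. Hence $I * J = 0$, contradicting primeness, so $I \cap J \ne 0$.

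For the moreover statement, I would proceed by induction on the number $k$ of ideals. The base cases $k = 1, 2$ are immediate (the second being the part just proved). For the inductive step, given nonzero ideals $I_1, \dots, I_{k+1}$, apply the inductive hypothesis to conclude $I_1 \cap \dots \cap I_k$ is a nonzero ideal of $A$; then apply the $k=2$ case to this ideal and $I_{k+1}$ to conclude $I_1 \cap \dots \cap I_{k+1}$ is a nonzero ideal of $A$.

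The main obstacle is the key claim $I \cap J = 0 \Rightarrow I * J = 0$: one must be careful to verify both $I * J \subseteq I$ and $I * J \subseteq J$ hold for arbitrary ideals, which rests on the fact that $\lambda_a$ preserves every ideal and that ideals are closed under the $*$ operation on either side — facts that follow from the definitions and Lemma~\ref{lem:ideals}, but which should be stated cleanly before invoking them.
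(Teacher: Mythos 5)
Your proposal is correct and takes essentially the same route as the paper: the paper's proof is simply the observation that $I*J\subseteq I\cap J$ (which is exactly your two inclusions $I*J\subseteq I$ and $I*J\subseteq J$, valid since $I*A\subseteq I$ and $A*J\subseteq J$ for ideals), so primeness gives $0\ne I*J\subseteq I\cap J$, followed by the same induction for finitely many ideals. Your contradiction framing is only a cosmetic difference.
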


\begin{proof}
	The intersection of any two ideals is an ideal. Notice that $I* J\subseteq
	I\cap J$, therefore $I\cap J\neq 0$.  The last assertion can be proved by
	induction on the number of ideals. 
\end{proof}

\begin{lem}
\label{PPP}
 Let $A$ be a semiprime skew brace with no non-zero solvable ideals and $I$ be a non-zero ideal in $A$. Then the 
 product of any number of copies of $I$, multiplied in any order, is non-zero.
 Moreover, any product of copies of $I$ contains some $I_{n}$. 
\end{lem}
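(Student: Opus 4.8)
The plan is to prove the ``Moreover'' clause first --- every product of copies of $I$ contains some $I_n$ --- and then read off the non-vanishing assertion. A ``product of copies of $I$, multiplied in any order'' is, formally, an arbitrary parenthesization of a word $I*I*\cdots*I$ with finitely many factors, all equal to $I$; I will induct on the number $k$ of factors. For $k=1$ the product is $I=I_1$, and there is nothing to prove. For $k\ge 2$, the outermost application of $*$ splits such an expression as $P=Q*R$, where $Q$ and $R$ are parenthesized products of strictly fewer copies of $I$, so the inductive hypothesis supplies indices $m$ and $n$ with $I_m\subseteq Q$ and $I_n\subseteq R$.

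For the inductive step I would combine two elementary facts. The first is the monotonicity of $*$ in each variable: if $X\subseteq X'$ and $Y\subseteq Y'$ then $X*Y\subseteq X'*Y'$, which is immediate from the definition since every additive generator $x*y$ of $X*Y$ is also a generator of $X'*Y'$. The second is the chain $I_{i+1}\subseteq I_i$ for all $i$, which is the inclusion recorded in Section~\ref{solvable} applied to the sub skew brace $I$. Putting $N=\max\{m,n\}$, these give $I_N\subseteq I_m\subseteq Q$ and $I_N\subseteq I_n\subseteq R$, hence $I_{N+1}=I_N*I_N\subseteq Q*R=P$; this closes the induction and proves the ``Moreover'' clause.

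It then remains to deduce non-vanishing. If some product $P$ of copies of $I$ were zero, then by the ``Moreover'' clause $I_n\subseteq P=0$ for some $n$, so $I_n=0$, i.e.\ $I$ is a solvable skew brace; since $I\ne 0$, this is a non-zero solvable ideal of $A$, contradicting the hypothesis. Hence no product of copies of $I$ can be zero.

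I do not expect a real obstacle. The only things to watch are the correct formalization of ``in any order'' as an arbitrary association (the factors themselves being all equal, there is no further ordering datum), the fact that monotonicity must be applied with \emph{both} inclusions $I_N\subseteq I_m$ and $I_N\subseteq I_n$ simultaneously, and the harmless feature that the index one produces ($N+1$) is not prescribed --- the statement only asserts containment of \emph{some} $I_n$. Note also that semiprimeness of $A$ is not actually used for this lemma; only the assumption that $A$ has no non-zero solvable ideals is needed.
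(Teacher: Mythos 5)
Your proof is correct and follows essentially the same route as the paper: induction on the number of factors, splitting the product at the outermost $*$ and using the descending chain $I_{i+1}\subseteq I_i$ together with monotonicity of $*$ to trap some $I_{N+1}$ inside the product. The only (harmless) divergences are that you make explicit the $\max\{m,n\}$ step the paper glosses over, and you derive the final contradiction directly from the ``no non-zero solvable ideals'' hypothesis, whereas the paper routes it through the Baer radical and semiprimeness via Theorem~\ref{thm:B(A)=0<=>semiprime}.
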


\begin{proof}
 We use an induction on $i$, the number of copies of $I$ used in our product. If
 $i=1$ then our product equals $I=I_{1}\neq 0$. Suppose now that any product of
 any number of at most $i$ copies of $I$ contains $I_{n}$ for some $n$. Let $P$
 be a product of $i+1$ copies of $I$, for some $i>0$. Then $P=P_{1}*P_{2}$
 where $P_{1}$ and $P_{2}$ are products of at most $i$ copies of $I$. By the
 inductive assumption, $I_{n}*I_{n}=I_{n+1}\subseteq P_{1}*P_{2}=P$ for some
 $n$. 
Notice that $I_{n}\neq 0$ for every $n$. Indeed if $I_{n}=0$ then  $I$ is
solvable. \end{proof}

\begin{thm}
 If $A$ is a prime skew brace with no non-zero solvable ideals, then a product of any number of non-zero ideals,
 multiplied in any order, in $A$ is non-zero.
\end{thm}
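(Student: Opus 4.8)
The plan is to reduce the statement about arbitrary products of non-zero ideals $I_1, I_2, \dots, I_k$ (in any bracketing and order) to the single-ideal case already handled in Lemma~\ref{PPP}. First I would observe that, since $A$ is prime and hence has no non-zero solvable ideals by hypothesis, Lemma~\ref{intersection} applies: the intersection $J = I_1 \cap I_2 \cap \cdots \cap I_k$ is a non-zero ideal of $A$. The key inclusion is that $J \subseteq I_m$ for each $m$, and $*$ is monotone in both arguments on subgroups of $(A,+)$, so any product formed from the $I_m$'s (in whatever order and association) contains the corresponding product formed by replacing every factor with $J$. Thus it suffices to show that every product of copies of the single non-zero ideal $J$ is non-zero.

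Next I would invoke Lemma~\ref{PPP} directly: $A$ is prime, hence semiprime, and has no non-zero solvable ideals, so the hypotheses of Lemma~\ref{PPP} are met with the ideal $J$. That lemma gives that any product of copies of $J$, in any order, contains some $J_n$, and that each $J_n \ne 0$ (else $J$ would be solvable, contradicting primeness via Theorem~\ref{thm:B(A)=0<=>semiprime}). Therefore the product of copies of $J$ is non-zero, and by the monotonicity remark above the original product of the $I_m$'s is non-zero as well.

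The only genuine point requiring care — and the step I would flag as the main obstacle — is making the ``monotonicity'' argument fully precise across an arbitrary parenthesization. One should check by a straightforward induction on the number of factors that if $X_1 \subseteq Y_1$ and $X_2 \subseteq Y_2$ are subgroups of $(A,+)$ then $X_1 * X_2 \subseteq Y_1 * Y_2$ (immediate from the definition of $*$ on subsets as the subgroup generated by the pointwise products), and then that any bracketed product of the $I_m$'s with each factor replaced by $J$ is contained in it. This is routine but should be stated, since the theorem explicitly allows products ``multiplied in any order,'' which one must read as allowing any association as well. Once this is in place the theorem follows immediately from Lemmas~\ref{intersection} and~\ref{PPP}.
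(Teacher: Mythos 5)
Your proposal is correct and follows essentially the same route as the paper: form the non-zero intersection $T=\bigcap_k I_k$ via Lemma~\ref{intersection}, observe that replacing every factor by $T$ yields a product $Q\subseteq P$, and conclude $Q\neq 0$ from Lemma~\ref{PPP} since a prime skew brace is semiprime. The monotonicity of $*$ that you flag is exactly the step the paper dismisses with ``Clearly $Q\subseteq P$,'' so your added care there is a minor refinement rather than a divergence.
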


\begin{proof} 
	Denote our product of ideals as $P$.  Let $I_{1}, \ldots , I_{m}$ be ideals
	used in the product $P$. By Lemma \ref{intersection}, $T=\bigcap
	_{k=1}^{m}I_{k}$ is a nonzero ideal in $A$. Let $Q$ be a product of copies
	of ideal $T$ obtained by exchanging any ideal among $I_{1}, \ldots , I_{m}$
	appearing in the product $P$ by ideal $T$. Clearly $Q\subseteq P$.  Note
	that $A$ is semiprime since it is prime. By Lemma~\ref{PPP}, $Q\neq
	0$.
\end{proof}

\section*{Acknowledgements}

We thank Emiliano Acri, Ferran Ced\'o, Rofoldo
Cossalter, Louis Rowen and Ivan Shestakov for comments and corrections.

\section*{Funding}

\textcolor{red}{The first-named author is partially supported by CCP CoDiMa (EP/M022641/1)
and the OpenDreamKit Horizon 2020 European Research Infrastructures project (\#676541).
The second-named author is supprted by the 
European Research Council Advanced grant 320974. 
The third-named author is supported by PICT-201-0147, MATH-AmSud 17MATH-01 and
European Research Council 
Advanced grant 320974.}

\bibliographystyle{abbrv}
\bibliography{refs}

\def\cprime{$'$}
\begin{thebibliography}{10}

\bibitem{MR0050563}
S.~A. Amitsur.
\newblock A general theory of radicals. {I}. {R}adicals in complete lattices.
\newblock {\em Amer. J. Math.}, 74:774--786, 1952.

\bibitem{MR0059256}
S.~A. Amitsur.
\newblock A general theory of radicals. {II}. {R}adicals in rings and
  bicategories.
\newblock {\em Amer. J. Math.}, 76:100--125, 1954.

\bibitem{MR0059257}
S.~A. Amitsur.
\newblock A general theory of radicals. {III}. {A}pplications.
\newblock {\em Amer. J. Math.}, 76:126--136, 1954.

\bibitem{Bachiller3}
D.~{Bachiller}.
\newblock {Solutions of the Yang-Baxter equation associated to skew left
  braces, with applications to racks}.
\newblock {\em arXiv:1611.08138}.

\bibitem{MR3320237}
D.~Bachiller.
\newblock Classification of braces of order {$p^3$}.
\newblock {\em J. Pure Appl. Algebra}, 219(8):3568--3603, 2015.

\bibitem{MR3465351}
D.~Bachiller.
\newblock Counterexample to a conjecture about braces.
\newblock {\em J. Algebra}, 453:160--176, 2016.

\bibitem{MR3763276}
D.~Bachiller.
\newblock Extensions, matched products, and simple braces.
\newblock {\em J. Pure Appl. Algebra}, 222(7):1670--1691, 2018.

\bibitem{MR3527540}
D.~Bachiller, F.~Ced\'o, and E.~Jespers.
\newblock Solutions of the {Y}ang-{B}axter equation associated with a left
  brace.
\newblock {\em J. Algebra}, 463:80--102, 2016.

\bibitem{BCJO2}
D.~{Bachiller}, F.~{Ced{\'o}}, E.~{Jespers}, and J.~{Okni{\'n}ski}.
\newblock {Asymmetric product of left braces and simplicity; new solutions of
  the Yang-Baxter equation}.
\newblock {\em arXiv:1705.08493}.

\bibitem{BCJO}
D.~{Bachiller}, F.~{Ced{\'o}}, E.~{Jespers}, and J.~{Okninski}.
\newblock {Iterated matched products of finite braces and simplicity; new
  solutions of the Yang-Baxter equation}.
\newblock {\em Accepted for publication in Trans. Amer. Math. Soc.
  arXiv:1610.00477}.

\bibitem{MR1935567}
H.~U. Besche, B.~Eick, and E.~A. O'Brien.
\newblock A millennium project: constructing small groups.
\newblock {\em Internat. J. Algebra Comput.}, 12(5):623--644, 2002.

\bibitem{Brz}
T.~{Brzezi{\'n}ski}.
\newblock {Trusses: between braces and rings}.
\newblock {\em arXiv:1710.02870}.

\bibitem{MR3478858}
F.~Catino, I.~Colazzo, and P.~Stefanelli.
\newblock Regular subgroups of the affine group and asymmetric product of
  radical braces.
\newblock {\em J. Algebra}, 455:164--182, 2016.

\bibitem{MR3649817}
F.~Catino, I.~Colazzo, and P.~Stefanelli.
\newblock Semi-braces and the {Y}ang-{B}axter equation.
\newblock {\em J. Algebra}, 483:163--187, 2017.

\bibitem{MR2486886}
F.~Catino and R.~Rizzo.
\newblock Regular subgroups of the affine group and radical circle algebras.
\newblock {\em Bull. Aust. Math. Soc.}, 79(1):103--107, 2009.

\bibitem{CDS}
F.~{Ced{\'o}}, T.~{Gateva-Ivanova}, and A.~{Smoktunowicz}.
\newblock {Braces and symmetric groups with special conditions}.
\newblock {\em arXiv:1703.08766, accepted for publication in J. Pure Appl.
  Alg.}

\bibitem{MR3574204}
F.~Ced\'o, T.~Gateva-Ivanova, and A.~Smoktunowicz.
\newblock On the {Y}ang-{B}axter equation and left nilpotent left braces.
\newblock {\em J. Pure Appl. Algebra}, 221(4):751--756, 2017.

\bibitem{MR3177933}
F.~Ced{\'o}, E.~Jespers, and J.~Okni{\'n}ski.
\newblock Braces and the {Y}ang-{B}axter equation.
\newblock {\em Comm. Math. Phys.}, 327(1):101--116, 2014.

\bibitem{MR3447734}
F.~Ced\'o, E.~Jespers, and J.~Okni\'nski.
\newblock Nilpotent groups of class three and braces.
\newblock {\em Publ. Mat.}, 60(1):55--79, 2016.

\bibitem{Childs}
L.~N. {Childs}.
\newblock {Left skew braces and the Galois correspondence for Hopf Galois
  structures}.
\newblock {\em arXiv:1802.03448}.

\bibitem{DeCommer}
K.~{De Commer}.
\newblock {Actions of skew braces and set-theoretic solutions of the reflection
  equation}.
\newblock {\em arXiv:1803.05826}.

\bibitem{Dietzel}
C.~{Dietzel}.
\newblock {Braces of order $p^2q$}.
\newblock {\em arXiv:1801.06911}.

\bibitem{MR1183474}
V.~G. Drinfel{\cprime}d.
\newblock On some unsolved problems in quantum group theory.
\newblock In {\em Quantum groups ({L}eningrad, 1990)}, volume 1510 of {\em
  Lecture Notes in Math.}, pages 1--8. Springer, Berlin, 1992.

\bibitem{MR1722951}
P.~Etingof, T.~Schedler, and A.~Soloviev.
\newblock Set-theoretical solutions to the quantum {Y}ang-{B}axter equation.
\newblock {\em Duke Math. J.}, 100(2):169--209, 1999.

\bibitem{GI15}
T.~{Gateva-Ivanova}.
\newblock {Set-theoretic solutions of the Yang-Baxter equation, Braces, and
  Symmetric groups}.
\newblock {\em arXiv:1507.02602}, 2015.

\bibitem{MR3647970}
L.~Guarnieri and L.~Vendramin.
\newblock Skew braces and the {Y}ang--{B}axter equation.
\newblock {\em Math. Comp.}, 86(307):2519--2534, 2017.

\bibitem{JVA}
E.~{Jespers} and A.~{Van Antwerpen}.
\newblock {Left semi-braces and solutions to the Yang-Baxter equation}.
\newblock {\em arXiv:1802.09993}.

\bibitem{MR1769723}
J.-H. Lu, M.~Yan, and Y.-C. Zhu.
\newblock On the set-theoretical {Y}ang-{B}axter equation.
\newblock {\em Duke Math. J.}, 104(1):1--18, 2000.

\bibitem{Rump}
W.~Rump.
\newblock {C}lassification of cyclic braces, {II}.
\newblock {\em Accepted for publication in Trans. Amer. Math. Soc.}

\bibitem{MR2278047}
W.~Rump.
\newblock Braces, radical rings, and the quantum {Y}ang-{B}axter equation.
\newblock {\em J. Algebra}, 307(1):153--170, 2007.

\bibitem{MR2298848}
W.~Rump.
\newblock Classification of cyclic braces.
\newblock {\em J. Pure Appl. Algebra}, 209(3):671--685, 2007.

\bibitem{MR3291816}
W.~Rump.
\newblock The brace of a classical group.
\newblock {\em Note Mat.}, 34(1):115--144, 2014.

\bibitem{MR3763907}
A.~Smoktunowicz and L.~Vendramin.
\newblock On skew braces (with an appendix by {N}. {B}yott and {L}.
  {V}endramin).
\newblock {\em J. Comb. Algebra}, 2(1):47--86, 2018.

\bibitem{MR1809284}
A.~Soloviev.
\newblock Non-unitary set-theoretical solutions to the quantum {Y}ang-{B}axter
  equation.
\newblock {\em Math. Res. Lett.}, 7(5-6):577--596, 2000.

\end{thebibliography}

\end{document}